\numberwithin{equation}{section}
\newtheorem{theorem}{Theorem}[section]
\newtheorem{proposition}[theorem]{Proposition}
\newtheorem{corollary}[theorem]{Corollary}
\newtheorem{lemma}[theorem]{Lemma}
\newtheorem{definition}[theorem]{Definition}
\newtheorem{remark}[theorem]{Remark}
\newtheorem{example}[theorem]{Example}
\begin{document}
\begin{center} {\Large \bf On the index of reducibility in Noetherian modules}
 \footnote {\noindent{\bf Key words and phrases:} Irreducible ideal; Irredundant primary decomposition; Irredundant irreducible decomposition; Index of reducibility; Maximal embedded component.\\
\indent{\bf AMS Classification 2010}: Primary 13A15, 13C99; Secondary 13D45, 13H10.\\
This research is funded by Vietnam National Foundation for Science
and Technology Development (NAFOSTED) under grant numbers
101.04-2014.25 and 101.04-2014.15.}
\end{center}

\begin{center}
                {Nguyen Tu Cuong, Pham  Hung Quy and Hoang Le Truong}\\
\end{center}
\begin{abstract}
 Let $M$ be a finitely generated module over a Noetherian ring $R$ and $N$ a submodule. The index of reducibility ir$_M(N)$ is  the number of irreducible submodules that appear in an irredundant irreducible decomposition of $N$ (this number is well defined by a classical result of Emmy Noether). Then the main results of this paper are: (1) $\mathrm{ir}_M(N) = \sum_{\frak p \in \mathrm{Ass}_R(M/N)} \dim_{k(\frak p)} \mathrm{Soc}(M/N)_{\frak p} $; (2) For an irredundant primary decomposition of $N = Q_1 \cap \cdots \cap Q_n$, where $Q_i$ is $\frak p_i$-primary, then  $\mathrm{ir}_M(N) = \mathrm{ir}_M(Q_1) + \cdots + \mathrm{ir}_M(Q_n)$ if and only if $Q_i$ is a $\frak p_i$-maximal embedded component of $N$ for all embedded associated prime ideals $\frak p_i$ of $N$;
 (3) For an ideal $I$  of $R$ there exists a polynomial $\mathrm{Ir}_{M,I}(n)$ such that $\mathrm{Ir}_{M,I}(n)=\mathrm{ir}_M(I^nM)$ for  $n\gg 0$. Moreover, 
$\mathrm{bight}_M(I)-1\le \deg(\mathrm{Ir}_{M,I}(n))\le \ell_M(I)-1$; (4) If $(R, \frak m)$ is local,  $M$ is Cohen-Macaulay if and only if there exist an integer $l$ and a parameter ideal $\frak q$ of $M$ contained in $\frak m^l$ such that   $\mathrm{ir}_M({\frak q}M)=\dim_{R/\frak m}\mathrm{Soc}(H^d_{\frak m}(M))$, where $d=\dim M$. 
\end{abstract}

\section{Introduction}

One of the fundamental results in commutative algebra is the irreducible decomposition theorem \cite[Satz II and Satz IV]{N21} proved by Emmy Noether in 1921. In this paper she had showed that any ideal $I$ of a Noetherian ring $R$ can be expressed as a finite intersection of irreducible ideals, and the number of irreducible ideals in such an irredundant irreducible decomposition is independent of the choice of the decomposition. This number is then called the index of reducibility of $I$ and denoted by $\mathrm{ir}_R(I)$. Although irreducible ideals belong to basic objects of commutative algebra, there are not so much papers on the study of irreducible ideals and the index of reducibility. Maybe the first important paper on irreducible ideals after Noether's work is of W. Gr\"{o}bner \cite{G35} (1935). Since then there are interesting works on the index of reducibility of parameter ideals on local rings by D.G. Northcott \cite{No57} (1957), S. Endo and M. Narita \cite{EN64} (1964) or S. Goto and N. Suzuki \cite{GS84} (1984). Especially, W. Heinzer, L.J. Ratliff and K. Shah propounded in a series of papers \cite{HRS94}, \cite{HRS95-1}, \cite{HRS95-2}, \cite{HRS95-3} a theory of maximal embedded components which is useful for the study of irreducible ideals. It is clear that the concepts of irreducible ideals, the index of reducibility and maximal embedded components can be extended for finitely generated modules.  Then the purpose of this paper is to investigate the index of reducibility of submodules of a finitely generated $R$-module $M$  concerning its maximal embedded components as well as the behaviour of the function ir$_M(I^nM)$, where $I$ is an ideal of $R$,  and to present  applications of the index of reducibility for studying the structure of the module $M$. The paper is divided into 5 sections. Let $M$ be a finitely generated module over a Noetherian ring and $N$ a submodule of $M$. We present in the next section a formula to compute the index of reducibility $\mathrm{ir}_M(N)$ by using the socle dimension of the module $(M/N)_{\frak p}$ for all $\frak p \in \mathrm{Ass}_R(M/N)$ (see Lemma 2.3). This formula is a generalization of a well-known result which says that $\mathrm{ir}_M(N) = \dim_{R/\frak m} \mathrm{Soc}(M/N)$ provided $(R, \frak m)$ is a local ring and $\lambda_R(M/N) < \infty$. Section 3 is devoted to answer the following question: When is the index of reducibility of a submodule $N$  equal to the sum of the indices of reducibility of their primary components in a given irredundant primary decomposition of $N$? It turns out here that the notion of maximal embedded components of $N$ introduced by Heinzer, Ratliff and Shah is the key for answering this question (see Theorem \ref{T3.2}). In Section 4, we consider the index of reducibility $\mathrm{ir}_M(I^nM)$ of powers of an ideal $I$ as a function in $n$ and show that this function is in fact a polynomial for sufficiently large $n$. Moreover, we can prove that $\mathrm{bight}_M(I)-1$ is a lower bound and the $\ell_M(I)-1$ is an upper bound for the degree of this polynomial (see Theorem \ref{T4.1}), where $\mathrm{bight}_M(I)$ is the big height and 
 $\ell_M(I)$ is the analytic spread of $M$ with respect to the ideal $I$. However, the degree of this polynomial is still mysterious to us. We can only give examples to show that these bounds are optimal. In the last section, we involve in working out some applications of the index of reducibility. A classical result of Northcott  \cite{No57} says that the index of reducibility of a parameter ideal in a Cohen-Macaulay local ring is dependent only on the ring and not on the choice of parameter ideals. We will generalize Northcott's result in this section and get a characterization for the Cohen-Macaulayness of a Noetherian module in terms of the index of reducibility of parameter ideals (see Theorem \ref{T4.7}).

\section{Index of reducibility of submodules}
Throughout this paper $R$ is a Noetherian ring and $M$ is a finitely generated $R$-module. For an $R$-module $L$, $\lambda_R(L)$ denotes the length of $L$.

\begin{definition}\rm A submodule $N$ of $M$ is called an {\it irreducible submodule} if $N$ can not be written as an intersection of two properly larger submodules of $M$. The number of irreducible components of an irredundant irreducible decomposition of $N$, which is independent of the choice of the decomposition  by Noether \cite{N21}, is called the {\it index of reducibility} of $N$ and denoted by $\mathrm{ir}_M(N)$.
\end{definition}

\begin{remark}\rm We denoted by $\mathrm{Soc}(M)$ the sum of all simple submodules of $M$. $\mathrm{Soc}(M)$ is called the socle of $M$. If $R$ is a local ring with the unique maximal ideal $\frak m$ and $\frak k =  R/\frak m$ its residue field, then it is well-known that $\mathrm{Soc}(M) = 0:_M\frak m$ is a $\frak k$-vector space of finite dimension. Let $N$ be a submodule of $M$ with $\lambda_R(M/N) < \infty$. Then it is easy to check that $\mathrm{ir}_M(N) = \lambda_R((N:\frak m)/N) = \dim_{\frak k} \mathrm{Soc}(M/N).$
\end{remark}

The following lemma presents a formula for computing the index of reducibility $\mathrm{ir}_M(N)$  without the requirement that $R$ is local and $\lambda_R(M/N) < \infty$. It should be mentioned here that the first conclusion of the lemma would be known to experts. But, we cannot find its proof anywhere. So for the completeness, we give a short proof for it. Moreover, from this proof we obtain immediately a second conclusion which is useful for proofs of  further results in this paper. For a prime ideal $\frak p$, we use $k(\frak p)$ to denote the residue field $ R_{\frak p}/\frak pR_{\frak p}$ of the local ring $R_{\frak p}$.

\begin{lemma}\label{L2.3} Let $N$ be a submodule of $M$. Then
  $$\mathrm{ir}_M(N) = \sum_{\frak p \in \mathrm{Ass}_R(M/N)} \dim_{k(\frak p)} \mathrm{Soc}(M/N)_{\frak p} .$$
Moreover, for any $\frak p \in \mathrm{Ass}_R(M/N)$, there is a $\frak p$-primary submodule $N(\frak p)$ of $M$ with $\mathrm{ir}_M(N(\frak p)) = \dim_{k(\frak p)} \mathrm{Soc}(M/N)_{\frak p} $ such that
$$N = \bigcap_{\frak p \in \mathrm{Ass}_R(M/N)}N(\frak p)$$
is an irredundant primary decomposition of $N$.
\end{lemma}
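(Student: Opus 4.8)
The plan is to reduce at once to the case $N=0$ by passing to $M/N$: if $L\supseteq N$ then $\mathrm{ir}_M(L)=\mathrm{ir}_{M/N}(L/N)$ via the lattice isomorphism, while $\mathrm{Ass}_R(M/N)$ and the modules $(M/N)_{\frak p}$ are unchanged. So assume $N=0$ and put $t_{\frak p}=\dim_{k(\frak p)}\mathrm{Soc}(M_{\frak p})$ for $\frak p\in\mathrm{Ass}_R(M)$, noting $t_{\frak p}\ge 1$ since $\frak p R_{\frak p}\in\mathrm{Ass}_{R_{\frak p}}(M_{\frak p})$. I would first set up two tools. (i) For a $\frak p$-primary submodule $Q$ of $M$, $\mathrm{ir}_M(Q)=\dim_{k(\frak p)}\mathrm{Soc}((M/Q)_{\frak p})$: localization $L\mapsto L_{\frak p}$ is an inclusion- and intersection-preserving bijection between the submodules of the $\frak p$-coprimary module $M/Q$ and the $R_{\frak p}$-submodules of $(M/Q)_{\frak p}$, so $\mathrm{ir}_M(Q)=\mathrm{ir}_{M_{\frak p}}(Q_{\frak p})$, and since $(M/Q)_{\frak p}$ has finite length over $R_{\frak p}$ the Remark (applied over $R_{\frak p}$) gives $\mathrm{ir}_{M_{\frak p}}(Q_{\frak p})=\dim_{k(\frak p)}\mathrm{Soc}((M/Q)_{\frak p})$. (ii) Given an irredundant irreducible decomposition $0=\bigcap_{i=1}^{r}Q_i$ with $Q_i$ being $\frak p_i$-primary (irreducible submodules are primary), grouping those $Q_i$ sharing an associated prime produces, for each $\frak p\in\mathrm{Ass}_R(M)$, a $\frak p$-primary submodule $C_{\frak p}=\bigcap_{\frak p_i=\frak p}Q_i$; then $0=\bigcap_{\frak p}C_{\frak p}$ is an irredundant primary decomposition, $\{Q_i:\frak p_i=\frak p\}$ is an irredundant irreducible decomposition of $C_{\frak p}$, and therefore $\mathrm{ir}_M(0)=r=\sum_{\frak p}\mathrm{ir}_M(C_{\frak p})=\sum_{\frak p}\dim_{k(\frak p)}\mathrm{Soc}((M/C_{\frak p})_{\frak p})$.

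For the inequality $\mathrm{ir}_M(0)\ge\sum_{\frak p}t_{\frak p}$ I would argue that the $\frak p$-component of \emph{any} primary decomposition $0=\bigcap_{\frak q}C_{\frak q}$ becomes disjoint from the socle after localizing at $\frak p$: localizing yields $0=\bigcap_{\frak q\subseteq\frak p}(C_{\frak q})_{\frak p}$ in $M_{\frak p}$, and for $\frak q\subsetneq\frak p$ the module $M_{\frak p}/(C_{\frak q})_{\frak p}$ has associated primes only $\frak q R_{\frak p}\ne\frak p R_{\frak p}$, whence $\mathrm{Soc}(M_{\frak p})\subseteq(C_{\frak q})_{\frak p}$; intersecting gives $(C_{\frak p})_{\frak p}\cap\mathrm{Soc}(M_{\frak p})=0$. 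Hence $\mathrm{Soc}(M_{\frak p})$ injects into $\mathrm{Soc}((M/C_{\frak p})_{\frak p})$, so $\dim_{k(\frak p)}\mathrm{Soc}((M/C_{\frak p})_{\frak p})\ge t_{\frak p}$; applying this to the grouped components of tool (ii) and summing yields the claim.

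For the reverse inequality and the ``moreover'' I would construct an optimal decomposition. Take any irredundant primary decomposition $0=\bigcap_{\frak p}C_{\frak p}$; for each $\frak p$ the submodule $(C_{\frak p})_{\frak p}$ of $M_{\frak p}$ is $\frak p R_{\frak p}$-primary and meets $\mathrm{Soc}(M_{\frak p})$ trivially, so (using that $M_{\frak p}$ is Noetherian, which makes chains of such submodules stabilize) Zorn's lemma gives a submodule $B_{\frak p}\supseteq(C_{\frak p})_{\frak p}$ maximal among $\frak p R_{\frak p}$-primary submodules of $M_{\frak p}$ with $B_{\frak p}\cap\mathrm{Soc}(M_{\frak p})=0$. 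The crucial step is then that $\mathrm{Soc}(M_{\frak p}/B_{\frak p})=(\mathrm{Soc}(M_{\frak p})+B_{\frak p})/B_{\frak p}$: if $x+B_{\frak p}$ lay in the socle of $M_{\frak p}/B_{\frak p}$ but not in this submodule, then $B_{\frak p}+R_{\frak p}x$ would be a strictly larger $\frak p R_{\frak p}$-primary submodule still meeting $\mathrm{Soc}(M_{\frak p})$ in $0$ (split on whether the relevant coefficient is in $\frak p R_{\frak p}$ or a unit), contradicting maximality; hence $\dim_{k(\frak p)}\mathrm{Soc}(M_{\frak p}/B_{\frak p})=t_{\frak p}$. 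Setting $N(\frak p)=\{m\in M:m/1\in B_{\frak p}\}$, one checks that $N(\frak p)$ is $\frak p$-primary with $(N(\frak p))_{\frak p}=B_{\frak p}$, so by tool (i) $\mathrm{ir}_M(N(\frak p))=\dim_{k(\frak p)}\mathrm{Soc}(M_{\frak p}/B_{\frak p})=t_{\frak p}$; one also checks $0=\bigcap_{\frak p}N(\frak p)$ (if $0\ne m$ lay in every $N(\frak p)$, choosing $\frak q\in\mathrm{Ass}_R(Rm)$ some $R_{\frak q}$-multiple of $m/1$ lands in $\mathrm{Soc}(M_{\frak q})\cap B_{\frak q}=0$) and that this decomposition is irredundant (for $\frak q\subsetneq\frak p$ one has $\mathrm{Soc}(M_{\frak p})\subseteq(N(\frak q))_{\frak p}$, so $\bigcap_{\frak q\ne\frak p}N(\frak q)\ne 0$). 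Refining each $N(\frak p)$ into $t_{\frak p}$ irreducibles exhibits an irreducible decomposition of $0$ with $\sum_{\frak p}t_{\frak p}$ members, so $\mathrm{ir}_M(0)\le\sum_{\frak p}t_{\frak p}$. Combined with the lower bound this proves the displayed formula, the family $\{N(\frak p)\}$ establishes the ``moreover'', and undoing the reduction to $N=0$ finishes the argument.

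The main obstacle I anticipate is the reverse inequality: one must produce \emph{one} primary decomposition in which every component is as irreducible as possible, and making this precise needs the maximality (Zorn) step together with the socle computation for a maximal socle-avoiding primary submodule, plus the routine but necessary check that the contraction $N(\frak p)$ of $B_{\frak p}$ is genuinely $\frak p$-primary with $(N(\frak p))_{\frak p}=B_{\frak p}$. A pitfall to avoid is that one cannot simply contract an arbitrary $\frak p R_{\frak p}$-primary submodule of $M_{\frak p}$ missing the socle and expect the resulting family to intersect in $0$; starting from a bona fide primary decomposition is what makes the relevant poset nonempty and the intersection correct. (There is a quicker route to the formula alone via Matlis theory, since $\mathrm{ir}_M(N)$ equals the number of indecomposable summands of the injective hull $E(M/N)\cong\bigoplus_{\frak p}E(R/\frak p)^{\dim_{k(\frak p)}\mathrm{Soc}((M/N)_{\frak p})}$, but the elementary argument above is what also yields the explicit components $N(\frak p)$.)
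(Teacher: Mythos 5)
Your route is genuinely different from the paper's. After the same reduction to $N=0$, the paper does everything in one stroke via Matlis theory: it writes the injective envelope as $E(M)=\bigoplus_{\frak p\in\mathrm{Ass}(M)}E(R/\frak p)^{t_{\frak p}}$ with $t_{\frak p}=\dim_{k(\frak p)}\mathrm{Soc}(M_{\frak p})$, defines $N(\frak p)$ (and the irreducible $N_{ij}$) as the intersections of $M$ with the kernels of the projections onto the $\frak p$-components (resp.\ onto single indecomposable summands), notes that the resulting irreducible decomposition of $0$ is irredundant by the minimality of the injective envelope, and concludes by Noether's uniqueness theorem. In particular the injective-hull argument delivers the explicit components $N(\frak p)$ as well, so your closing parenthetical undersells it. Your elementary alternative --- the lower bound by grouping an irredundant irreducible decomposition by associated primes, and the upper bound by enlarging the localized components to submodules $B_{\frak p}\subseteq M_{\frak p}$ maximal among $\frak p R_{\frak p}$-primary submodules avoiding $\mathrm{Soc}(M_{\frak p})$, then contracting --- is sound in outline; I checked the socle computation for $B_{\frak p}$, the contraction $N(\frak p)$, the verification that $\bigcap_{\frak p}N(\frak p)=0$, and the irredundancy, and they go through. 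It is longer, but it avoids injective hulls entirely and is closer in spirit to Heinzer--Ratliff--Shah's maximal embedded components.

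There is, however, one false supporting claim that must be repaired: in your tool (i) you assert that for the $\frak p$-coprimary module $L=M/Q$, localization $K\mapsto K_{\frak p}$ is a bijection between submodules of $L$ and $R_{\frak p}$-submodules of $L_{\frak p}$. This fails whenever $\frak p$ is not maximal: take $R=k[x,y]$, $M=R$, $Q=(x)$, $\frak p=(x)$; then $L\cong k[y]$ is $\frak p$-coprimary, yet $yL$ and $L$ have the same localization $L_{\frak p}\cong k(y)$, and $L$ has infinitely many submodules while $L_{\frak p}$ has only two. The conclusion of tool (i), namely $\mathrm{ir}_M(Q)=\mathrm{ir}_{M_{\frak p}}(Q_{\frak p})=\dim_{k(\frak p)}\mathrm{Soc}\big((M/Q)_{\frak p}\big)$, is nevertheless true, and the standard fix is to restrict the correspondence: every component of an irredundant irreducible decomposition of the $\frak p$-primary submodule $Q$ is itself $\frak p$-primary (group the components by their primes and use that the primes of the resulting irredundant primary decomposition are exactly $\mathrm{Ass}(M/Q)=\{\frak p\}$), and $\frak p$-primary submodules are $\frak p$-saturated, so on these localization is injective with contraction as inverse and preserves intersections, irreducibility and irredundancy; then apply your Remark-style computation over $R_{\frak p}$, where $(M/Q)_{\frak p}$ has finite length. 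Since tool (i) feeds both your lower and your upper bound (via $\mathrm{ir}_M(N(\frak p))=t_{\frak p}$), this repair is needed; with it your proof is complete.
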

\begin{proof}
  Passing to the quotient $M/N$ we may assume without any loss of generality that $N=0$. Let $\mathrm{Ass}_R(M) = \{ \frak p_1,..., \frak p_n\}$. We set $t_i = \dim_{k(\frak p_i)}\mathrm{Soc}(M_{\frak p_i})$ and $t = t_1 + \cdots + t_n$. Let $\mathcal{F} = \{\frak p_{11},..., \frak p_{1t_1}, \frak p_{21},..., \frak p_{2t_2}, ..., \frak p_{n1},..., \frak p_{nt_n}\}$ be a family of prime ideals of $R$ such that $\frak p_{i1} = \cdots = \frak p_{it_i} = \frak p_i$ for all $i = 1,..., n$. Denote $E(M)$ the injective envelop of $M$. Then we can write
  $$E(M) = \bigoplus_{i=1}^n E(R/\frak p_i)^{t_i} = \bigoplus_{\frak p_{ij} \in \mathcal{F}}E(R/\frak p_{ij}).$$
  Let $$ \pi_i: \oplus_{i=1}^n E(R/\frak p_i)^{t_i} \to E(R/\frak p_i)^{t_i} \ \ \mathrm{and} \ \  \pi_{ij}: \oplus_{\frak p_{ij} \in \mathcal{F}}E(R/\frak p_{ij}) \to E(R/\frak p_{ij})$$ be the canonical projections for all $i = 1,..., n$ and $j = 1,..., t_i$, and set $N(\frak p_i) = M \cap \ker \pi_i$, $N_{ij} = M \cap \ker \pi_{ij}$. Since $E(R/\frak p_{ij})$ are indecomposible, $N_{ij}$ are irreducible submodules of $M$. Then it is easy to check that $N(\frak p_i)$ is a $\frak p_i$-primary submodule of $M$ having an irreducible decomposition  $N(\frak p_i) = N_{i1} \cap \cdots \cap N_{it_i}$ for all $i=1,\ldots , n$. 
 Moreover, because of the minimality of $E(M)$ among injective modules containing $M$, the finite intersection $$0 = N_{11} \cap \cdots \cap N_{1t_1} \cap \cdots \cap N_{n1} \cap \cdots \cap N_{nt_n}$$ 
    is an irredundant irreducible decomposition of $0$. Therefore $0 = N(\frak p_1) \cap \cdots \cap N(\frak p_n)$ is an irredundant primary decomposition of $0$ with
  $\mathrm{ir}_M(N(\frak p_i)) = \dim_{k(\frak p_i)} \mathrm{Soc}(M/N)_{\frak p_i} $ and $\mathrm{ir}_M(0) = \sum_{\frak p \in \mathrm{Ass}(M)} \dim_{k(\frak p)} \mathrm{Soc}(M)_{\frak p}$
 as required.
 \end{proof}

\section{Index of reducibility of maximal embedded components}

Let $N$ be a submodule of $M$ and $\frak p \in \mathrm{Ass}_R(M/N)$. We use $\bigwedge_{\frak p}(N)$ to denote the set of all $\frak p$-primary submodules of $M$ which appear in an irredundant primary decomposition of $N$. We say that a $\frak p$-primary submodule $Q$ of $M$ is a $\frak p$-primary component of $N$ if $Q \in \bigwedge_{\frak p}(N)$, and $Q$ is said to be a {\it maximal embedded component} (or more precisely, $\frak p$-maximal embedded component) of $N$ if $Q$ is a maximal element in the set $\bigwedge_{\frak p}(N)$. It should be mentioned that the notion of maximal embedded components was first introduced for commutative rings  by Heinzer, Ratliff and Shah. They proved in the papers \cite{HRS94}, \cite{HRS95-1}, \cite{HRS95-2}, \cite{HRS95-3} many interesting properties of maximal embedded components as well as they showed that this notion is an important tool for  studying irreducible ideals.\\

We recall now a result of Y. Yao \cite{Y02} which is often used in the proof of the next theorem.
\begin{theorem}[Yao \cite{Y02}, Theorem 1.1] \label{T3.1} Let $N$ be a submodule of $M$, $\mathrm{Ass}_R(M/N) = \{\frak p_1,..., \frak p_n\}$ and $Q_i \in \bigwedge_{\frak p_i}(N)$, $i = 1,..., n$. Then $N = Q_1 \cap \cdots \cap Q_n$ is an irredundant primary decomposition of $N$.
\end{theorem}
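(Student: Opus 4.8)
The plan is first to reduce to the case $N=0$: passing to $M/N$, we may assume $N=0$, so that $\mathrm{Ass}_R(M)=\{\frak p_1,\dots,\frak p_n\}$ and each $Q_i$ is a $\frak p_i$-primary submodule of $M$ occurring in \emph{some} irredundant primary decomposition of $0$. Next I would observe that it suffices to prove the single containment $Q_1\cap\cdots\cap Q_n\subseteq 0$: the reverse containment is trivial, each $Q_i$ is primary by hypothesis, the radicals $\frak p_i$ are pairwise distinct and exhaust $\mathrm{Ass}_R(M)$, and a primary decomposition of $0$ with these properties is automatically irredundant — if some $Q_j$ could be omitted, then embedding $M$ into $\bigoplus_{i\ne j}M/Q_i$ would force $\frak p_j\in\mathrm{Ass}_R(M)\subseteq\{\frak p_i:i\ne j\}$, which is absurd. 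So the whole theorem comes down to $\bigcap_i Q_i=0$.

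The crux is a description of $\mathrm{Ass}_R(Q_i)$. Since $Q_i$ occurs in an irredundant primary decomposition $0=Q_i\cap R_i$, where $R_i$ is an intersection of $\frak p_k$-primary submodules with $k\ne i$, we have $Q_i\cap R_i=0$, hence $Q_i\hookrightarrow M/R_i$; and $M/R_i$ embeds into a finite direct sum of the quotients of $M$ by those $\frak p_k$-primary components, so $\mathrm{Ass}_R(Q_i)\subseteq\mathrm{Ass}_R(M/R_i)\subseteq\{\frak p_k:k\ne i\}$. The content to extract from the hypothesis $Q_i\in\bigwedge_{\frak p_i}(N)$ is precisely the existence of this complementary component $R_i$ avoiding $\frak p_i$; granting it, $\frak p_i\notin\mathrm{Ass}_R(Q_i)$.

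With this in hand the argument is immediate. Suppose $0\ne x\in\bigcap_{i=1}^n Q_i$. Then $Rx\ne 0$, so $\mathrm{Ass}_R(Rx)\ne\emptyset$; choose $\frak q\in\mathrm{Ass}_R(Rx)$. Since $Rx\subseteq M$ we have $\frak q\in\mathrm{Ass}_R(M)$, say $\frak q=\frak p_k$. But $Rx\subseteq Q_k$, so $\frak p_k=\frak q\in\mathrm{Ass}_R(Rx)\subseteq\mathrm{Ass}_R(Q_k)$, contradicting $\frak p_k\notin\mathrm{Ass}_R(Q_k)$. Hence $\bigcap_i Q_i=0$, i.e. $\bigcap_i Q_i=N$ before the reduction, and irredundancy follows as explained above.

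I expect the only genuine content to lie in the second paragraph — unwinding the definition of $\bigwedge_{\frak p_i}(N)$ to produce, for each $i$, a complement $R_i$ with $0=Q_i\cap R_i$ and $\frak p_i\notin\mathrm{Ass}_R(M/R_i)$; once $\mathrm{Ass}_R(Q_i)$ is controlled, the rest is routine bookkeeping with associated primes. (An alternative to the contradiction step is to localize at each $\frak p_j$ and check $(\bigcap_i Q_i)_{\frak p_j}=N_{\frak p_j}$, using that localization at the associated primes is faithful on $M/N$; but the direct argument above seems cleaner.) Note that Lemma~\ref{L2.3} already exhibits one concrete such decomposition — the novelty here is that the $Q_i$ may be harvested from different decompositions.
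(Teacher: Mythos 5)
The paper never proves this statement: it is quoted from Yao \cite{Y02} and used as a black box in the proof of Theorem \ref{T3.2}, so there is no in-paper argument to compare yours against; what matters is whether your blind proof stands on its own, and it does. The reduction to $N=0$ is harmless, and your observation that irredundancy comes for free (pairwise distinct radicals exhausting $\mathrm{Ass}_R(M)$, plus the embedding $M\hookrightarrow\bigoplus_{i\ne j}M/Q_i$ if some $Q_j$ were superfluous) is correct. The crux, $\frak p_i\notin\mathrm{Ass}_R(Q_i)$, is also right, but you should make explicit that the existence of the complement $R_i$ with the stated properties is exactly the first uniqueness theorem for primary decompositions: in the irredundant decomposition $0=Q_i\cap R_i$ from which $Q_i$ is harvested, the remaining components are primary to the primes of $\mathrm{Ass}_R(M)\setminus\{\frak p_i\}$, each occurring once, because the set of primes attached to any irredundant primary decomposition of $0$ is $\mathrm{Ass}_R(M)$. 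Granting that, $Q_i\hookrightarrow M/R_i\hookrightarrow\bigoplus_{k\ne i}M/Q_k'$ gives $\mathrm{Ass}_R(Q_i)\subseteq\{\frak p_k : k\ne i\}$, and your final step — picking $\frak q\in\mathrm{Ass}_R(Rx)\subseteq\mathrm{Ass}_R(M)$ for a nonzero $x\in\bigcap_i Q_i$ and contradicting $\frak q\notin\mathrm{Ass}_R(Q_k)$ — is airtight (it even degenerates correctly when $n=1$, where $R_i=M$ and $Q_1=0$). So your argument is a legitimate elementary proof that would make the paper self-contained at this point, independently of however Yao argues in the cited source.
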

The following theorem is the main result of this section.
\begin{theorem}\label{T3.2} Let $N$ be a submodule of $M$ and $\mathrm{Ass}_R(M/N) = \{\frak p_1,..., \frak p_n\}$. Let $N = Q_1 \cap \cdots \cap Q_n$ be an irredundant primary decomposition of $N$, where $Q_i$ is $\frak p_i$-primary for all $i= 1, \ldots , n$. Then $\mathrm{ir}_M(N) = \mathrm{ir}_M(Q_1) + \cdots + \mathrm{ir}_M(Q_n)$ if and only if $Q_i$ is a $\frak p_i$-maximal embedded component of $N$ for all embedded associated prime ideals $\frak p_i$ of $N$.
  \end{theorem}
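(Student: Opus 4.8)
The plan is to evaluate both sides with Lemma~\ref{L2.3} and then compare the resulting socle dimensions one associated prime at a time, after localizing. First I would note that Lemma~\ref{L2.3} gives $\mathrm{ir}_M(N)=\sum_{i=1}^n s_i$ with $s_i:=\dim_{k(\frak p_i)}\mathrm{Soc}\big((M/N)_{\frak p_i}\big)$, while $\mathrm{Ass}_R(M/Q_i)=\{\frak p_i\}$ forces $\mathrm{ir}_M(Q_i)=\dim_{k(\frak p_i)}\mathrm{Soc}\big((M/Q_i)_{\frak p_i}\big)=:r_i$. So it suffices to establish: (i) $s_i\le r_i$ for all $i$; (ii) $s_i=r_i$ whenever $\frak p_i$ is minimal in $\mathrm{Ass}_R(M/N)$; and (iii) for an embedded $\frak p_i$, $s_i=r_i$ if and only if $Q_i$ is a $\frak p_i$-maximal embedded component of $N$. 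Then summing (i) gives $\mathrm{ir}_M(N)\le\sum_i\mathrm{ir}_M(Q_i)$, and by (ii) and (iii) equality holds exactly when every embedded $Q_i$ is a maximal embedded component, which is the claim.

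Next I would fix $\frak p=\frak p_i$, localize, and put $\bar M=(M/N)_{\frak p}=M_{\frak p}/N_{\frak p}$, $\bar E=H^0_{\frak pR_{\frak p}}(\bar M)$. Using that $(Q_j)_{\frak p}=M_{\frak p}$ for $\frak p_j\not\subseteq\frak p$, that $M_{\frak p}/(Q_j)_{\frak p}$ is $\frak p_jR_{\frak p}$-primary (hence has vanishing $H^0_{\frak pR_{\frak p}}$) for $\frak p_j\subsetneq\frak p$, and that $M_{\frak p}/(Q_i)_{\frak p}$ has finite length, one reads off from $\bar M\hookrightarrow\bigoplus_{\frak p_j\subseteq\frak p}M_{\frak p}/(Q_j)_{\frak p}$ that $\bar E=\big(\bigcap_{\frak p_j\subsetneq\frak p}(Q_j)_{\frak p}\big)/N_{\frak p}$. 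Writing $\bar C:=(Q_i)_{\frak p}/N_{\frak p}$, this gives $\bar C\cap\bar E=0$ and $\bar M/\bar C\cong(M/Q_i)_{\frak p}$. If $\frak p$ is minimal there is no $\frak p_j\subsetneq\frak p$, so $\bar E=0=\bar C$ and $s_i=r_i$, which is (ii). If $\frak p$ is embedded, then $\bar M/\bar E\ne0$ has vanishing $H^0_{\frak pR_{\frak p}}$, so $\mathrm{Soc}(\bar C)\subseteq\mathrm{Soc}(\bar M/\bar E)=0$ because $\bar C\hookrightarrow\bar M/\bar E$; hence $\mathrm{Soc}(\bar M)\cap\bar C=0$ and the projection $\bar M\to\bar M/\bar C$ induces an injection $\mathrm{Soc}(\bar M)\hookrightarrow\mathrm{Soc}(\bar M/\bar C)$, which is (i). This injection is onto — i.e. $s_i=r_i$ — precisely when $(\bar C:_{\bar M}\frak pR_{\frak p})=\bar C+\mathrm{Soc}(\bar M)$; and since any $e\in\bar E$ with $(\frak pR_{\frak p})e\subseteq\bar C$ lies in $\bar C\cap\bar E=0$, hence in $\mathrm{Soc}(\bar M)$, this is equivalent to the formally weaker condition $(\bar C:_{\bar M}\frak pR_{\frak p})\subseteq\bar C+\bar E$.

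It then remains to match this last condition with maximality of $Q_i$ in $\bigwedge_{\frak p}(N)$. For one direction, if $Q_i$ is not maximal, choose $Q_i'\in\bigwedge_{\frak p}(N)$ with $Q_i'\supsetneq Q_i$; then $\bar C':=(Q_i')_{\frak p}/N_{\frak p}$ strictly contains $\bar C$ and, since $\bar E$ is intrinsic to $\bar M$, the same computation applied to a primary decomposition of $N$ through $Q_i'$ still gives $\bar C'\cap\bar E=0$. As $\bar M/\bar C$ has finite length, $\bar C'/\bar C$ has a simple submodule, producing $\bar m\in\bar C'\setminus\bar C$ with $(\frak pR_{\frak p})\bar m\subseteq\bar C$; and $\bar m\notin\bar C+\bar E$, since $\bar m=c+e$ with $c\in\bar C$, $e\in\bar E$ would force $e=\bar m-c\in\bar C'\cap\bar E=0$, hence $\bar m\in\bar C$. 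Thus the condition fails and $s_i<r_i$. Conversely, assume $s_i<r_i$ and pick $\bar m\in(\bar C:_{\bar M}\frak pR_{\frak p})\setminus(\bar C+\bar E)$. Put $\bar C'=\bar C+R_{\frak p}\bar m=\bar C+k(\frak p)\bar m$; then $\bar C\subsetneq\bar C'$, the quotient $\bar M/\bar C'$ is of finite length and nonzero (else $\bar M/\bar C$ would be spanned over $k(\frak p)$ by one element, forcing $r_i\le1$, contrary to $r_i>s_i\ge1$), and $\bar C'\cap\bar E=0$ (from $c+\lambda\bar m\in\bar E$ with $\lambda\in k(\frak p)$ one gets $\lambda=0$, else $\bar m\in\bar C+\bar E$, and then $c\in\bar C\cap\bar E=0$). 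Let $Q_i'$ be the contraction to $M$ of the preimage of $\bar C'$ in $M_{\frak p}$. Then $Q_i'$ is $\frak p$-primary with $Q_i'\supsetneq Q_i$, and $N=Q_1\cap\cdots\cap Q_{i-1}\cap Q_i'\cap Q_{i+1}\cap\cdots\cap Q_n$ is again an irredundant primary decomposition: the inclusion $\supseteq$ is clear, and for $\subseteq$, if $m$ lies in every $Q_j$ with $j\ne i$ and in $Q_i'$, then $m/1\in N_{\frak p}$, so $sm\in N\subseteq Q_i$ for some $s\notin\frak p$, whence $m\in Q_i$ because $s$ is a nonzerodivisor on $M/Q_i$; irredundancy is immediate from $Q_i'\supseteq Q_i$ (alternatively one invokes Theorem~\ref{T3.1}). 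Hence $Q_i'\in\bigwedge_{\frak p}(N)$ strictly contains $Q_i$, so $Q_i$ is not maximal. This proves (iii), and with it the theorem.

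The main obstacle is step (iii): the decisive point is that surjectivity of $\mathrm{Soc}(\bar M)\to\mathrm{Soc}(\bar M/\bar C)$ is governed by $\bar E=H^0_{\frak pR_{\frak p}}(\bar M)$ rather than merely by $\mathrm{Soc}(\bar M)$, and it is exactly this submodule $\bar E$ that records the room available to enlarge $Q_i$ inside $\bigwedge_{\frak p}(N)$. The only genuinely fiddly bookkeeping is checking, in the converse direction of (iii), that the enlarged module $\bar C'$ descends to an honest $\frak p_i$-primary component of $N$.
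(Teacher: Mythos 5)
Your argument is correct, and it takes a genuinely different route from the paper's. The paper reduces to $N=0$ and argues globally: for sufficiency it counts irreducible components (a dropped component would produce a strictly larger member of $\bigwedge_{\frak p_i}(0)$, contradicting maximality), and for necessity it leans twice on Yao's compatibility theorem (Theorem \ref{T3.1}) — once to mix the components $N(\frak p_j)$ from Lemma \ref{L2.3} with $Q_i$ and force $\mathrm{ir}_M(Q_i)=\dim_{k(\frak p_i)}\mathrm{Soc}(M_{\frak p_i})$, and once to get $Q_i'\cap(0:_M\frak m)=0$ in the final element chase. You instead compare the two sides of the equality termwise via Lemma \ref{L2.3}, localize, and make the intrinsic submodule $\bar E=H^0_{\frak pR_{\frak p}}\big((M/N)_{\frak p}\big)$ do the work that Yao's theorem does in the paper: the identity $\bar C\cap\bar E=0$ (valid for the localization $\bar C$ of any member of $\bigwedge_{\frak p}(N)$, since $\bar E$ can be computed from any irredundant decomposition) replaces the compatibility statement, and your criterion $(\bar C:_{\bar M}\frak pR_{\frak p})\subseteq\bar C+\bar E$ is the localized analogue of the paper's equality $Q_i:\frak m=Q_i+0:_M\frak m$, with $\bar E$ in place of the socle. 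What this buys: your proof is self-contained (no appeal to \cite{Y02}), it proves the stronger termwise statement $s_i\le r_i$ with equality iff $Q_i$ is maximal — i.e. it establishes Corollary \ref{T3.3} directly and deduces the theorem, whereas the paper deduces the corollary from the theorem — at the price of the hands-on verification that $\bar C'$ contracts to a genuine member of $\bigwedge_{\frak p}(N)$, which the paper gets for free from Theorem \ref{T3.1}. Two small points to fix in writing it up: in the minimal case you assert $\bar E=0$, but the empty intersection gives $\bar E=\bar M$ (harmless, since only $\bar C=0$ is used to get $s_i=r_i$); and in checking the modified decomposition, the step ``$m/1\in N_{\frak p}$'' deserves the one-line justification that the image of $m/1$ lies in $\bar C'\cap\bar E=0$, which is exactly where that property is used.
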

  \begin{proof} As in the proof of Lemma \ref{L2.3}, we may assume that $N = 0$.\\
  {\it Sufficient condition}: Let $0 = Q_1 \cap \cdots \cap Q_n$ be an irredundant primary decomposition of the zero submodule $0$, where $Q_i$ is maximal in $\bigwedge_{\frak p_i}(0)$, $i = 1,..., n$. Setting $\mathrm{ir}_M(Q_i) = t_i$, and let $Q_i = Q_{i1} \cap \cdots \cap Q_{it_i}$ be an irredundant irreducible decomposition of $Q_i$. Suppose that
  $$t_1 + \cdots + t_n = \mathrm{ir}_M(Q_1) + \cdots + \mathrm{ir}_M(Q_n) > \mathrm{ir}_M(0).$$
  Then there exist an $i \in \{1,..., n \}$ and a $j \in \{1,..., t_i\}$ such that
  $$Q_1 \cap \cdots \cap Q_{i-1} \cap Q_i' \cap Q_{i+1} \cap \cdots \cap Q_n \subseteq Q_{ij},$$
  where $Q_i' = Q_{i1} \cap \cdots \cap Q_{i(j-1)} \cap Q_{i(j+1)} \cap \cdots \cap Q_{it_i} \supsetneqq Q_i$. Therefore
  $$Q_i' \bigcap (\cap_{k \neq i} Q_k) = Q_i \bigcap (\cap_{k \neq i} Q_k) = 0$$
  is also an irredundant primary decomposition of $0$. Hence $Q_i' \in \bigwedge_{\frak p_i}(0)$ which contradicts the maximality of $Q_i$ in $\bigwedge_{\frak p_i}(0)$. Thus $\mathrm{ir}_R(0) = \mathrm{ir}_R(Q_1) + \cdots + \mathrm{ir}_R(Q_n)$ as required.\\
  {\it Necessary condition}: Assume that $0 = Q_1 \cap \cdots \cap Q_n$ is an irredundant primary decomposition of $0$ such that $\mathrm{ir}_M(0) = \mathrm{ir}_M(Q_1) + \cdots + \mathrm{ir}_M(Q_n)$. We have to prove that $Q_i$ are maximal in $\bigwedge_{\frak p_i}(0)$ for all $i = 1,..., n$. Indeed, let $N_1=N(\frak p_1),..., N_n=N(\frak p_n)$ be primary submodules of $M$ as in Lemma \ref{L2.3}, that is $N_i \in \bigwedge_{\frak p_i}(0)$, $0 = N_1 \cap \cdots \cap N_n$ and $\mathrm{ir}_M (0)  = \sum_{i=1}^n \mathrm{ir}_M(N_i) = \sum_{i=1}^n \dim_{k(\frak p_i)} \mathrm{Soc}(M_{\frak p_i})$. Then by Theorem \ref{T3.1} we see for any $0 \leq i \leq n$ that
  $$0 = N_1 \cap \cdots \cap N_{i-1} \cap Q_i \cap N_{i+1} \cap \cdots \cap N_n = N_1 \cap \cdots \cap N_n$$
are two irredundant primary decompositions of $0$. Therefore
$$\mathrm{ir}_M(Q_i) + \sum_{j \neq i} \mathrm{ir}_M(N_j) \geq \mathrm{ir}_M(0) = \sum_{j =1}^n \mathrm{ir}_M(N_j),$$
and so $\mathrm{ir}_M(Q_i) \geq \mathrm{ir}_M(N_i) = \dim_{k(\frak p_i)} \mathrm{Soc}(M_{\frak p_i})$ by Lemma \ref{L2.3}.\\
Similarly, it follows from the two irredundant primary decompositions
$$0 = Q_1 \cap \cdots \cap Q_{i-1} \cap N_i \cap Q_{i+1} \cap \cdots \cap Q_n = Q_1 \cap \cdots \cap Q_n$$
and the hypothesis that $\mathrm{ir}_M(N_i) \geq \mathrm{ir}_M(Q_i)$. Thus we get
$$\mathrm{ir}_M(Q_i) = \mathrm{ir}_M(N_i) = \dim_{k(\frak p_i)} \mathrm{Soc}(M_{\frak p_i})$$
for all $i = 1, ..., n$. Now, let $Q_i'$ be a maximal element of  $\bigwedge_{\frak p_i}(0)$ and $Q_i \subseteq Q_i'$. It remains to prove that $Q_i = Q_i'$. By localization at $\frak p_i$, we may assume that $R$ is a local ring with the unique maximal ideal $\frak m = \frak p_i$. Then, since $Q_i$ is an $\frak m$-primary submodule and by the equality above we have
$$\lambda_R((Q_i:\frak m)/Q_i) = \mathrm{ir}_M(Q_i) = \dim_{\frak k}\mathrm{Soc}(M) = \lambda_R(0:_M \frak m) = \lambda_R \big( (Q_i + 0:_M \frak m)/Q_i\big).$$
It follows that $Q_i: \frak m = Q_i + 0:_M \frak m$. If $Q_i \subsetneqq Q_i'$, there is an element $x \in Q_i' \setminus Q_i$. Then we can find a positive integer $l$ such that $\frak m^l x \subseteq Q_i$ but $\frak m^{l-1} x \nsubseteq Q_i$. Choose $y \in \frak m^{l-1} x \setminus Q_i$. We see that
$$y \in Q_i' \cap (Q_i : \frak m) = Q_i' \cap (Q_i + 0:_M \frak m) = Q_i + (Q_i' \cap 0:_M \frak m).$$
Since $0:_M \frak m \subseteq \cap_{j \neq i} Q_j$ and $Q_i' \cap (\cap_{j \neq i} Q_j) = 0$ by Theorem \ref{T3.1}, $Q_i' \cap (0:_M \frak m) = 0$. Therefore $y \in Q_i$ which is a contradiction with the choice of $y$. Thus $Q_i = Q_i'$ and the proof is complete.
  \end{proof}

The following characterization of maximal embedded components of $N$ in terms of the index of reducibility follows immediately from the proof of Theorem \ref{T3.2}.
\begin{corollary}\label{T3.3}
Let $N$ be a submodule of $M$ and $\frak p$ an embedded associated prime ideal of $N$. Then an element $Q \in \bigwedge_{\frak p}(N)$ is a maximal embedded component of $N$ if and only if $\mathrm{ir}_M(Q) = \dim_{k(\frak p)} \mathrm{Soc}(M/N)_{\frak p}$.
\end{corollary}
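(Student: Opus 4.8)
The plan is to extract both implications from the proof of Theorem~\ref{T3.2}. As in that proof, passing to $M/N$ we may assume $N=0$, so the claim becomes: $Q\in\bigwedge_{\frak p}(0)$ is a maximal element of $\bigwedge_{\frak p}(0)$ if and only if $\mathrm{ir}_M(Q)=\dim_{k(\frak p)}\mathrm{Soc}(M_{\frak p})$. One auxiliary inequality, already implicit in the proof of Theorem~\ref{T3.2}, will be used below: for \emph{every} $\frak q$-primary component $L\in\bigwedge_{\frak q}(0)$ one has $\mathrm{ir}_M(L)\ge\dim_{k(\frak q)}\mathrm{Soc}(M_{\frak q})$. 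Indeed, with the primary submodules $N(\frak q')$ of Lemma~\ref{L2.3}, Theorem~\ref{T3.1} shows that $0=L\cap\bigcap_{\frak q'\ne\frak q}N(\frak q')$ is an irredundant primary decomposition; the union of irreducible decompositions of $L$ and of the $N(\frak q')$ is a possibly redundant irreducible decomposition of $0$, so $\mathrm{ir}_M(0)\le\mathrm{ir}_M(L)+\sum_{\frak q'\ne\frak q}\mathrm{ir}_M(N(\frak q'))$, and comparing with the two formulas of Lemma~\ref{L2.3} gives the inequality.

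For the implication ``$\mathrm{ir}_M(Q)=\dim_{k(\frak p)}\mathrm{Soc}(M_{\frak p})\Rightarrow Q$ maximal'' I would simply repeat the last paragraph of the proof of Theorem~\ref{T3.2}, which uses nothing beyond this equality. Localizing at $\frak p$ reduces us to $(R,\frak m)$ local with $\frak m=\frak p$; then $Q$ is $\frak m$-primary and the hypothesis reads $\lambda_R((Q:\frak m)/Q)=\mathrm{ir}_M(Q)=\dim_{\frak k}\mathrm{Soc}(M)=\lambda_R(0:_M\frak m)$. Since $0:_M\frak m$ lies in every primary component distinct from $Q$ in a fixed irredundant primary decomposition of $0$ containing $Q$, we get $Q\cap(0:_M\frak m)=0$, whence $Q:\frak m=Q+(0:_M\frak m)$ by comparing lengths. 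If some $Q'\in\bigwedge_{\frak p}(0)$ satisfied $Q\subsetneqq Q'$, then choosing $x\in Q'\setminus Q$, a minimal $l\ge 1$ with $\frak m^{l}x\subseteq Q$, and $y\in\frak m^{l-1}x\setminus Q$ would give $y\in Q'\cap(Q:\frak m)=Q+(Q'\cap(0:_M\frak m))=Q$, where $Q'\cap(0:_M\frak m)=0$ by Theorem~\ref{T3.1}; this contradicts $y\notin Q$. Hence $Q$ is maximal in $\bigwedge_{\frak p}(0)$.

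For the converse, suppose $Q$ is maximal in $\bigwedge_{\frak p}(0)$ and write $\frak p=\frak p_1$. For each $j=2,\dots,n$ choose a maximal element $Q_j$ of $\bigwedge_{\frak p_j}(0)$; these exist because $M$ is Noetherian. By Theorem~\ref{T3.1}, $0=Q\cap Q_2\cap\cdots\cap Q_n$ is an irredundant primary decomposition all of whose components are maximal embedded components, so the sufficiency direction of Theorem~\ref{T3.2} yields $\mathrm{ir}_M(0)=\mathrm{ir}_M(Q)+\sum_{j=2}^{n}\mathrm{ir}_M(Q_j)$. Feeding in the auxiliary inequality for $Q$ and for each $Q_j$, together with the formula $\mathrm{ir}_M(0)=\sum_{k=1}^{n}\dim_{k(\frak p_k)}\mathrm{Soc}(M_{\frak p_k})$ of Lemma~\ref{L2.3}, forces each of these inequalities to be an equality; in particular $\mathrm{ir}_M(Q)=\dim_{k(\frak p)}\mathrm{Soc}(M_{\frak p})$, as wanted.

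I do not anticipate a genuine obstacle, since the corollary is essentially a repackaging of the proof of Theorem~\ref{T3.2}. The only point requiring attention is the bookkeeping in the converse: one must apply the auxiliary inequality to the \emph{arbitrary} maximal components $Q_j$ (not merely to the particular ones furnished by Lemma~\ref{L2.3}), and one must know that each $\bigwedge_{\frak p_j}(0)$ actually possesses maximal elements; both are immediate from the Noetherian hypothesis.
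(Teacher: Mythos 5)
Your proposal is correct and takes essentially the same approach as the paper, which offers no separate argument but states that the corollary follows immediately from the proof of Theorem \ref{T3.2}: your ``equality implies maximal'' direction reproduces verbatim the final paragraph of that proof, and your converse is the intended extraction, combining the sufficiency direction of Theorem \ref{T3.2} (applied to a decomposition whose components are all maximal, available by Theorem \ref{T3.1} and Noetherianness) with the inequality $\mathrm{ir}_M(Q)\ge\dim_{k(\frak p)}\mathrm{Soc}(M/N)_{\frak p}$ already implicit in the necessity part via Lemma \ref{L2.3}.
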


As consequences of Theorem \ref{T3.2}, we can obtain again several results on maximal embedded components proved by Heinzer, Ratliff and Shah. The following corollary is one of that results stated for modules. For a submodule $L$ of $M$ and $\frak p$ a prime ideal, we denote by $\mathrm{IC}_{\frak p}(L)$ the set of all irreducible $\frak p$-primary submodules of $M$ that appear in an irredundant irreducible decomposition of $L$, and denote by $\mathrm{ir}_{\frak p}(L)$ the number of irreducible $\frak p$-primary components in an irredundant irreducible decomposition of $L$ (this number is well defined by Noether \cite[Satz VII]{N21}).

\begin{corollary}[see \cite{HRS95-3}, Theorems 2.3 and 2.7] Let $N$ be a submodule of $M$ and $\frak p$ an embedded associated prime ideal of $N$. Then
\begin{enumerate}[{(i)}]\rm
\item {\it  $\mathrm{ir}_{\frak p}(N) = \mathrm{ir}_{\frak p}(Q) = \dim_{k(\frak p)} \mathrm{Soc}(M/N)_{\frak p}$ for any $\frak p$-maximal embedded component $Q$ of $N$.}
\item {\it $\mathrm{IC}_{\frak p}(N) = \bigcup_Q \mathrm{IC}_{\frak p}(Q)$, where the submodule $Q$ in the union runs over all $\frak p$-maximal embedded components of $N$.}
\end{enumerate}
\end{corollary}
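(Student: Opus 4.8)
The plan is to derive both statements from Theorem~\ref{T3.2}, Corollary~\ref{T3.3}, Yao's Theorem~\ref{T3.1}, and the invariance of $\mathrm{ir}_{\frak p}(N)$ (Noether, Satz~VII). As in the proofs above I would pass to $M/N$ and assume $N=0$; write $\mathrm{Ass}_R(M)=\{\frak p=\frak p_1,\frak p_2,\ldots,\frak p_n\}$ and let $N(\frak p_2),\ldots,N(\frak p_n)$ be the primary submodules furnished by Lemma~\ref{L2.3}, each of which lies in $\bigwedge_{\frak p_j}(0)$ and is a maximal embedded component by Corollary~\ref{T3.3} (trivially so at a minimal prime). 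Note first that if $Q$ is any $\frak p$-primary submodule, then every irreducible component in an irredundant irreducible decomposition of $Q$ is again $\frak p$-primary (it is primary, being irreducible over a Noetherian ring, and contains the $\frak p$-primary submodule $Q$), so $\mathrm{ir}_{\frak p}(Q)=\mathrm{ir}_M(Q)$; in particular, for a maximal embedded component $Q$ this already equals $\dim_{k(\frak p)}\mathrm{Soc}(M)_{\frak p}$ by Corollary~\ref{T3.3}.

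\textbf{Step 1: a grouping observation.} I would record the elementary fact that, given any irredundant irreducible decomposition $0=\bigcap_k N_k$ with $N_k$ being $\frak q_k$-primary, the grouped submodule $Q^{(\frak q)}:=\bigcap_{\frak q_k=\frak q}N_k$ is $\frak q$-primary and the intersection defining it is itself irredundant: removing some $N_{k_0}$ from it would, together with the untouched factors attached to the other primes, produce a proper sub-intersection still equal to $0$, contradicting irredundancy of $0=\bigcap_k N_k$. Hence the number of $\frak q$-primary members among the $N_k$ equals $\mathrm{ir}_M(Q^{(\frak q)})$.

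\textbf{Step 2: proving (i) and the inclusion $\supseteq$ of (ii).} Let $Q$ be a $\frak p$-maximal embedded component of $0$. By Yao's Theorem~\ref{T3.1}, $0=Q\cap N(\frak p_2)\cap\cdots\cap N(\frak p_n)$ is an irredundant primary decomposition all of whose components are maximal embedded components, so Theorem~\ref{T3.2} gives $\mathrm{ir}_M(0)=\mathrm{ir}_M(Q)+\sum_{j\geq 2}\mathrm{ir}_M(N(\frak p_j))$. Refining each component into an irredundant irreducible decomposition, the resulting irreducible decomposition of $0$ has exactly $\mathrm{ir}_M(0)$ members; since any irreducible decomposition has at least $\mathrm{ir}_M(0)$ members, it is irredundant, and exactly $\mathrm{ir}_M(Q)$ of its members (those coming from $Q$) are $\frak p$-primary. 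By invariance of $\mathrm{ir}_{\frak p}(0)$ this gives $\mathrm{ir}_{\frak p}(N)=\mathrm{ir}_M(Q)=\dim_{k(\frak p)}\mathrm{Soc}(M)_{\frak p}$, which with the first paragraph is (i). Choosing the refinement of $Q$ to be a prescribed irredundant irreducible decomposition of $Q$ shows $\mathrm{IC}_{\frak p}(Q)\subseteq\mathrm{IC}_{\frak p}(0)$, i.e.\ the inclusion $\supseteq$ in (ii).

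\textbf{Step 3: the inclusion $\subseteq$ of (ii), and the obstacle.} Conversely, let $Q_0\in\mathrm{IC}_{\frak p}(0)$, say $Q_0$ occurs in the irredundant irreducible decomposition $0=\bigcap_k N_k$, and put $Q:=\bigcap_{\frak q_k=\frak p}N_k$. By Step~1 this is an irredundant irreducible decomposition of the $\frak p$-primary component $Q$, so $Q_0\in\mathrm{IC}_{\frak p}(Q)$ and $\mathrm{ir}_M(Q)$ is the number of $\frak p$-primary $N_k$, which by (i) equals $\mathrm{ir}_{\frak p}(0)=\dim_{k(\frak p)}\mathrm{Soc}(M)_{\frak p}$; Corollary~\ref{T3.3} then forces $Q$ to be a $\frak p$-maximal embedded component, so $Q_0\in\bigcup_Q\mathrm{IC}_{\frak p}(Q)$, completing (ii). The only genuinely delicate part is the bookkeeping of Step~1 together with the ``exact count'' argument of Step~2; both rest on the equality in Theorem~\ref{T3.2} rather than on any new idea, so I anticipate no real obstacle beyond handling these counting arguments carefully.
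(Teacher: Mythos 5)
Your proposal is correct and follows essentially the same route as the paper: part (i) is extracted from Corollary \ref{T3.3} together with the additivity in Theorem \ref{T3.2} (applied, via Yao's Theorem \ref{T3.1}, to the decomposition built from Lemma \ref{L2.3}), and part (ii) is proved by grouping the $\frak p$-primary members of an irredundant irreducible decomposition and invoking Corollary \ref{T3.3}, exactly as in the paper's proof; your Step 2 is the ``converse inclusion'' the paper leaves to Theorems \ref{T3.1} and \ref{T3.2}, worked out in detail. One micro-justification in your first paragraph is wrong as stated: an irreducible (hence primary) submodule containing a $\frak p$-primary submodule need \emph{not} be $\frak p$-primary --- in $R=M=k[x,y]$ the ideal $(x,y^2)$ is irreducible and $(x,y)$-primary, yet it contains the $(x)$-primary ideal $(x)$. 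The correct reason that every component of an irredundant irreducible decomposition of a $\frak p$-primary $Q$ is $\frak p$-primary is your own Step 1 applied to $Q$: grouping by primes yields an irredundant \emph{primary} decomposition of $Q$, and the first uniqueness theorem forces all occurring primes to lie in $\mathrm{Ass}_R(M/Q)=\{\frak p\}$. The same remark tidies Step 3: to apply Corollary \ref{T3.3} you need $Q=\bigcap_{\frak q_k=\frak p}N_k\in\bigwedge_{\frak p}(0)$, which follows because the grouped decomposition of $0$ is itself an irredundant primary decomposition (same removal argument as in Step 1). With these two standard observations substituted for the faulty parenthetical, the argument is complete and matches the paper's.
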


\begin{proof}
(i) follows immediately from the proof of Theorem \ref{T3.2} and Corollary \ref{T3.3}.\\
(ii) Let $Q_1 \in \mathrm{IC}_{\frak p}(N)$ and $t_1 = \dim_{k(\frak p)} \mathrm{Soc}(M/N)_{\frak p}$. By the hypothesis and (i) there exists an irredundant irreducible decomposition $N= Q_{11}\cap  \ldots  \cap Q_{1 t_1} \cap Q_2 \cap \ldots \cap Q_l$ such that $Q_{11} = Q_1 , \  Q_{12},  \ldots , Q_{1 t_1}$ are all $\frak p$-primary submodules in this decomposition. Therefore $Q=Q_{11}\cap  \ldots  \cap Q_{1 t_1}$ is a maximal embedded component of $N$ by Corollary \ref{T3.3}, and so $Q_1\in \mathrm{IC}_{\frak p}(Q)$. The converse inclusion can be easily proved by applying Theorems \ref{T3.1} and \ref{T3.2}.
\end{proof}

\section{Index of reducibility of powers of an ideal}
Let $I$ be an ideal of $R$. It is well known by \cite{B79} that the $\mathrm{Ass}_R(M/I^nM)$ is stable for sufficiently large  $n$ ($n\gg 0$ for short). We will denote this stable set by $\text{A}_M(I)$. The big height, $\mathrm{bight}_M(I)$, of $I$ on $M$ is defined by
$$\mathrm{bight}_M(I)=\max\{\dim_{R_{\frak p}} M_{\frak p}\mid \text{ for all minimal prime ideals } {\frak p}\in \mathrm{Ass}_R(M/IM)\}.$$
Let $G(I)=\bigoplus\limits_{n\geq 0} I^n/I^{n+1}$ be the associated graded ring of $R$ with respect to $I$ and $G_M(I)=\bigoplus\limits_{n\geq 0} I^nM/I^{n+1}M$ the associated graded $G(I)$-module of $M$ with respect to $I$. If $R$ is a local ring with the unique maximal ideal $\frak m$, then the analytic spread $\ell_M(I)$ of $I$ on $M$ is defined by
$$\ell_M(I)=\dim_{G(I)}(G_M(I)/{\frak m} G_M(I)).$$
If $R$ is not local, the analytic spread $\ell_M(I)$ is also defined by
$$
\begin{aligned}
\ell_M(I)=\max\{ \ell_{M_{\frak m}}(IR_{\frak m})\mid {\frak m} &\text{ is a maximal ideal and }
\\&
\text{ there is a prime ideal }{\frak p} \in A_M(I) \text{ such that } {\frak p}\subseteq {\frak m}\}.
\end{aligned}$$
We use $\ell(I)$ to denote the analytic spread of the ideal $I$ on $R$.
The following theorem is the main result of this section.

\begin{theorem} \label{T4.1} Let $I$ be an ideal of $R$. Then there exists a polynomial $\mathrm{Ir}_{M,I}(n)$ with rational coefficients such that $\mathrm{Ir}_{M,I}(n)=\mathrm{ir}_M(I^nM)$ for sufficiently large  $n$. Moreover, we have
$$\mathrm{bight}_M(I)-1\le \deg(\mathrm{Ir}_{M,I}(n))\le \ell_M(I)-1.$$
\end{theorem}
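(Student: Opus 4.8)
The plan is to establish the polynomial behaviour of $\mathrm{ir}_M(I^nM)$ via Lemma~\ref{L2.3} by reducing each of its summands to a Hilbert-type function, and then to bound the degree from below and above separately. First I would apply Lemma~\ref{L2.3} to $N = I^nM$: for $n \gg 0$ the set $\mathrm{Ass}_R(M/I^nM)$ is the fixed set $\mathrm{A}_M(I)$, so
$$\mathrm{ir}_M(I^nM) = \sum_{\frak p \in \mathrm{A}_M(I)} \dim_{k(\frak p)} \mathrm{Soc}\big((M/I^nM)_{\frak p}\big) = \sum_{\frak p \in \mathrm{A}_M(I)} \dim_{k(\frak p)} \mathrm{Soc}\big(M_{\frak p}/I^nM_{\frak p}\big).$$
Thus it suffices to show each term $f_{\frak p}(n) := \dim_{k(\frak p)}\mathrm{Soc}(M_{\frak p}/I^nM_{\frak p}) = \lambda_{R_{\frak p}}\big((I^nM_{\frak p} :_{M_{\frak p}} \frak p R_{\frak p})/I^nM_{\frak p}\big)$ is eventually a polynomial in $n$, and to control its degree; the sum of finitely many such polynomials is again a polynomial, and its degree is the maximum of the degrees, which will give the stated bounds once I show the extremal $\frak p$ contribute the extremal degrees.

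For the polynomiality of $f_{\frak p}(n)$, I would localize and so assume $(R,\frak m)$ is local with $\frak p = \frak m$, and observe that $\mathrm{Soc}(M/I^nM) = (0 :_{M/I^nM} \frak m)$ is a finite-length module whose length I want to read off a graded structure. The natural move is to pass to the associated graded module $G_M(I)$: by the Artin--Rees lemma the filtration $\{I^nM\}$ is $I$-stable, and $0:_{M/I^nM}\frak m$ is governed by the degree-$n$ piece of a suitable finitely generated graded module over $G(I)$ (or over the Rees algebra), annihilated by $\frak m G(I)$. Concretely, I would consider the graded $G(I)$-module whose $n$-th component has length $f_{\frak p}(n)$ — for instance by relating $f_{\frak p}(n)$ to Hilbert functions of $\mathrm{Tor}$ or $\mathrm{Ext}$ modules of $G_M(I)$ built from a resolution of $R/\frak m$, each of which is a finitely generated graded $G(I)/\frak m G(I)$-module and hence has polynomial Hilbert function of degree at most $\dim G_M(I)/\frak m G_M(I) - 1 = \ell_M(I) - 1$. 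This yields both the existence of $\mathrm{Ir}_{M,I}(n)$ and the upper bound $\deg \mathrm{Ir}_{M,I}(n) \le \ell_M(I) - 1$ after summing over $\frak p \in \mathrm{A}_M(I)$ and invoking the non-local definition of $\ell_M(I)$.

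For the lower bound, I would isolate a minimal prime $\frak p$ of $I$ on $M$ with $\dim_{R_{\frak p}} M_{\frak p} = \mathrm{bight}_M(I) =: h$, localize at $\frak p$ so that now $IR_{\frak p}$ is $\frak m$-primary and $\dim M = h$. Then $\lambda(M/I^nM)$ is, for $n \gg 0$, a polynomial of degree exactly $h$ (the Hilbert--Samuel polynomial), while $\lambda\big((M/I^nM)\big) = \lambda\big((I^nM :_M \frak m)/I^nM\big) + \lambda\big(M/(I^nM :_M \frak m)\big)$; since $\mathrm{ir}_M(I^nM) \ge \lambda\big((I^nM:_M\frak m)/I^nM\big)$ is one of the two summands, if $\mathrm{ir}_M(I^nM)$ had degree $\le h-2$ then the other summand $\lambda\big(M/(I^nM:_M\frak m)\big)$ would have degree $h$ with the same leading term; but $M/(I^nM :_M \frak m)$ is a quotient of $M/I^nM$ by the nonzero socle, and a comparison of leading coefficients (the multiplicity is unchanged under quotient by a finite-length submodule only in lower order terms — in fact $e(I;M) = e(I; M/(I^nM:_M\frak m))$ fails at the right order) forces a contradiction. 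The cleanest version: the function $n \mapsto \lambda\big((I^nM:_M\frak m)/I^nM\big)$ has degree $\ge \dim M - 1$ because $\bigoplus_n (I^nM:_M\frak m)/I^nM$ surjects onto a graded module whose Hilbert polynomial has degree $\dim M - 1$ (e.g. it detects the difference of two Hilbert--Samuel polynomials of multiplicity-related modules), and this transfers back to give $\deg \mathrm{Ir}_{M,I}(n) \ge h - 1$.

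The main obstacle I expect is the lower bound, specifically making rigorous that $\dim_{k(\frak p)}\mathrm{Soc}(M_{\frak p}/I^nM_{\frak p})$ grows with degree at least $\dim M_{\frak p} - 1$ at a minimal prime of maximal local dimension: one must rule out accidental cancellation in the leading terms, which requires a genuine structural input — either an explicit finitely generated graded module over $G(I)/\frak m G(I)$ of dimension exactly $\dim M_{\frak p}$ whose socle-type invariant is $f_{\frak p}(n)$, or a careful multiplicity computation showing $\lambda(M/(I^nM:_M\frak m))$ cannot absorb the full top-degree term of $\lambda(M/I^nM)$. The upper bound is comparatively routine once the $\mathrm{Tor}/\mathrm{Ext}$-over-$G(I)$ reformulation is in place, since every such module is killed by $\frak m G(I)$ and thus is a finitely generated graded module over the $(\ell_M(I))$-dimensional ring $G(I)/\frak m G(I)$.
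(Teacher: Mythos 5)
Your skeleton matches the paper's: reduce via Lemma \ref{L2.3} to the functions $f_{\frak p}(n)=\dim_{k(\frak p)}\mathrm{Soc}(M_{\frak p}/I^nM_{\frak p})$ for $\frak p$ in the stable set $A_M(I)$, prove each is eventually polynomial of degree $\le \ell_{M_{\frak p}}(IR_{\frak p})-1$ via a graded module killed by $\frak m G(I)$, and get the lower bound from the minimal primes of $IM$, where $I$ becomes primary after localization. But both technical pillars are left as gestures. For the polynomiality/upper bound, the assertion that $0:_{M/I^nM}\frak m$ is ``governed by the degree-$n$ piece of a suitable finitely generated graded module over $G(I)$'' is exactly the point that needs proof: socle elements of $M/I^nM$ need not lie in $I^{n-1}M$, so $\lambda_R((I^nM:_M\frak m)/I^nM)$ is not on the nose a Hilbert function of $0:_{G_M(I)}\frak m G(I)$, and no Tor/Ext reformulation is actually carried out. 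The paper closes this with Schenzel's stability result: there is $l$ with $(0:_M\frak m)\cap I^nM=0$ and $I^{n+1}M:\frak m=I^{n+1-l}(I^lM:\frak m)+(0:_M\frak m)$ for $n\ge l$, which identifies the socle length with the (eventually polynomial) Hilbert function of $0:_{G_M(I)}\frak m G(I)$ plus the constant $\lambda_R(0:_M\frak m)$, and bounds the degree by $\dim_{G(I)}(G_M(I)/\frak m G_M(I))-1=\ell_M(I)-1$. If you wanted your route instead, you would have to prove (or cite, e.g.\ Kodiyalam's theorem on asymptotic Bass numbers) that $\mu^0(\frak m, M/I^nM)$ is eventually polynomial of degree $\le \ell_M(I)-1$; as written this step is missing.

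The more serious gap is the lower bound, as you yourself suspect. After localizing at a minimal prime $\frak p$ with $\dim_{R_\frak p}M_\frak p=h=\mathrm{bight}_M(I)$, your leading-coefficient comparison produces no contradiction: if the socle function had degree $\le h-2$, then $\lambda(M/(I^nM:\frak m))$ would simply agree with $\lambda(M/I^nM)$ in degrees $h$ and $h-1$, which is entirely consistent, and the ``cleanest version'' surjection onto a module of Hilbert degree $h-1$ is never constructed. The paper's missing structural input is elementary: in the localized, $\frak m$-primary situation, $I^nM/I^{n+1}M$ is a module over the Artinian ring $R/I$, hence embeds in finitely many copies of $E_{R/I}(\frak k)$, giving $\lambda_R(I^nM/I^{n+1}M)\le \lambda_R(R/I)\cdot\dim_{\frak k}\mathrm{Soc}(I^nM/I^{n+1}M)\le \lambda_R(R/I)\cdot\mathrm{ir}_M(I^{n+1}M)$; since the left side is the Hilbert--Samuel function, of degree $\dim M_{\frak p}-1$, the socle polynomial has degree exactly $\dim M_{\frak p}-1$ (Lemma \ref{L4.2}(ii)). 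One then transfers this back to $M$ by noting that at a minimal prime the $\frak p_i$-primary component of $I^nM$ is unique, so $\mathrm{ir}_M(I^nM)\ge\sum_i\mathrm{ir}_{M_{\frak p_i}}(I^nM_{\frak p_i})$ (this also follows directly from your Lemma \ref{L2.3} expression, so that part of your outline is fine). Without an argument of this kind, the inequality $\deg(\mathrm{Ir}_{M,I}(n))\ge \mathrm{bight}_M(I)-1$ is not established.
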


To prove Theorem \ref{T4.1}, we need the following lemma.

\begin{lemma}\label{L4.2} Suppose that $R$ is a local ring with the unique maximal ideal $\frak m$ and $I$ an ideal of $R$. Then
\begin{enumerate}[{(i)}]\rm
    \item {\it $\dim_{\frak k}\mathrm{Soc}(M/I^nM)=\lambda_R(I^nM:{\frak m}/I^nM)$ is a polynomial of degree $\le \ell_M(I)-1$ for  $n\gg 0$.}
    \item {\it Assume  that $I$ is an ${\frak m}$-primary ideal. Then $\mathrm{ir}_M(I^nM)=\lambda_R(I^nM:{\frak m}/I^nM)$ is a polynomial of degree $\dim_RM-1$ for  $n\gg 0$.}
  \end{enumerate}
  \end{lemma}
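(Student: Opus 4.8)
The plan is to express $\dim_{\frak k}\mathrm{Soc}(M/I^nM)$ through a single $\mathrm{Ext}$-module, show that the associated graded object is finitely generated over the Rees algebra of $I$, and then read off both polynomiality and the degree bound from Hilbert functions over the fiber cone. First note that $\mathrm{Soc}(M/I^nM)=\mathrm{Hom}_R(\frak k,M/I^nM)=(I^nM:_M\frak m)/I^nM$ is a finitely generated $R$-module killed by $\frak m$, hence a finite-dimensional $\frak k$-vector space, so the statement makes sense. Applying $\mathrm{Hom}_R(\frak k,-)$ to $0\to I^nM\to M\to M/I^nM\to 0$ gives an exact sequence
$$0\to\mathrm{Hom}_R(\frak k,I^nM)\to\mathrm{Hom}_R(\frak k,M)\to\mathrm{Hom}_R(\frak k,M/I^nM)\to\mathrm{Ext}^1_R(\frak k,I^nM)\to\mathrm{Ext}^1_R(\frak k,M).$$
Since $(0:_M\frak m)$ has finite length and $\bigcap_nI^nM=0$, the descending chain $(0:_M\frak m)\cap I^nM=\mathrm{Hom}_R(\frak k,I^nM)$ is eventually $0$, so for $n\gg0$ one gets $\dim_{\frak k}\mathrm{Soc}(M/I^nM)=\dim_{\frak k}\mathrm{Soc}(M)+\dim_{\frak k}W_n$, where $W_n:=\ker\big(\mathrm{Ext}^1_R(\frak k,I^nM)\to\mathrm{Ext}^1_R(\frak k,M)\big)$. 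Everything is thus reduced to the graded module $W=\bigoplus_{n\ge0}W_n$.

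Next I would bring in the Rees algebra $\mathcal{R}=\mathcal{R}(I)=\bigoplus_nI^nt^n$ and the Rees module $\mathcal{R}(I;M)=\bigoplus_nI^nMt^n$, which is finitely generated over the Noetherian ring $\mathcal{R}$. Choosing a resolution $F_\bullet\to\frak k$ by finitely generated free $R$-modules and using $\mathrm{Hom}_R(F_i,-)=(-)^{\mathrm{rank}(F_i)}$, the complexes $\mathrm{Hom}_R(F_\bullet,I^nM)$ assemble into a complex of finitely generated graded $\mathcal{R}$-modules whose terms are direct powers of $\mathcal{R}(I;M)$; hence its cohomology $\bigoplus_n\mathrm{Ext}^1_R(\frak k,I^nM)$, and its graded $\mathcal{R}$-submodule $W$, are finitely generated over $\mathcal{R}$. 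As $W$ is annihilated by $\frak m$, it is a finitely generated graded module over the fiber cone $F(I)=\mathcal{R}/\frak m\mathcal{R}=\bigoplus_nI^n/\frak mI^n$, a standard graded algebra over $\frak k$, so $n\mapsto\dim_{\frak k}W_n$ agrees for $n\gg0$ with a polynomial of degree $\dim_{F(I)}W-1$. Finally, $W$ is a subquotient of a power of $\mathcal{R}(I;M)$ and is killed by $\frak m$, so
$$\mathrm{Supp}_{\mathcal{R}}W\subseteq\mathrm{Supp}_{\mathcal{R}}\big(\mathcal{R}(I;M)/\frak m\mathcal{R}(I;M)\big)=\mathrm{Supp}\big(G_M(I)/\frak mG_M(I)\big),$$
whence $\dim W\le\dim\big(G_M(I)/\frak mG_M(I)\big)=\ell_M(I)$. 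Together with the displayed equality this proves (i) (in the degenerate case $I^nM=0$ for $n\gg0$, all quantities are eventually constant and the statement is read accordingly).

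For (ii), since $I$ is $\frak m$-primary we have $\lambda_R(M/I^nM)<\infty$, so $\mathrm{ir}_M(I^nM)=\dim_{\frak k}\mathrm{Soc}(M/I^nM)$ by Remark 2.2, and $\ell_M(I)=\dim_RM=:d$ because the analytic spread of an $\frak m$-primary ideal equals the dimension of the module; thus (i) gives that $\mathrm{ir}_M(I^nM)$ is eventually a polynomial of degree $\le d-1$. For the reverse inequality I would use the inclusion $I^{n-1}M/I^nM\hookrightarrow M/I^nM$, giving $\dim_{\frak k}\mathrm{Soc}(M/I^nM)\ge\dim_{\frak k}\mathrm{Soc}_R(I^{n-1}M/I^nM)$. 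As $I^{n-1}M/I^nM$ is a module over the Artinian local ring $A=R/I$, embedding it into a finite direct sum of copies of the injective hull $E$ of $\frak k$ over $A$ and using $\lambda_R(E)=\lambda_R(R/I)$ gives $\dim_{\frak k}\mathrm{Soc}_R(I^{n-1}M/I^nM)\ge\lambda_R(I^{n-1}M/I^nM)/\lambda_R(R/I)$. Since $\lambda_R(I^{n-1}M/I^nM)$ is, for $n\gg0$, a polynomial of degree exactly $d-1$ with positive leading coefficient $e(I;M)/(d-1)!$, the polynomial $\mathrm{ir}_M(I^nM)$ has degree $\ge d-1$, hence equal to $d-1$ (for $d\ge1$; when $d=0$, $M$ has finite length and $\mathrm{ir}_M(I^nM)$ is eventually the constant $\dim_{\frak k}\mathrm{Soc}(M)$).

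The step I expect to be the real obstacle is the finite generation of $\bigoplus_n\mathrm{Ext}^1_R(\frak k,I^nM)$ over the Rees algebra — this is exactly what forces $\dim_{\frak k}\mathrm{Soc}(M/I^nM)$ to be eventually polynomial — together with locating its support inside $\mathrm{Supp}\big(G_M(I)/\frak mG_M(I)\big)$, which is what sharpens the degree bound from the crude $\ell(I)-1$ to $\ell_M(I)-1$. The identity $\mathrm{ir}_M(I^nM)=\dim_{\frak k}\mathrm{Soc}(M/I^nM)$ and the lower bound in (ii) are comparatively routine.
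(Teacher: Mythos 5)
Your argument is correct (up to the same degenerate edge cases that the lemma itself glosses over, e.g.\ $\dim M=0$ or $I$ nilpotent on $M$, which you at least flag), but for part (i) it takes a genuinely different route from the paper. The paper works directly with the graded socle $0:_{G_M(I)}\frak m G(I)$ of the associated graded module and then invokes Schenzel's stability result, which gives an $l$ with $(0:_M\frak m)\cap I^nM=0$ and $I^{n+1}M:\frak m=I^{n+1-l}(I^lM:\frak m)+0:_M\frak m$ for $n\ge l$, to transfer the polynomiality of $\lambda\big(((I^{n+1}M:\frak m)\cap I^nM)/I^{n+1}M\big)$ to $\lambda\big((I^nM:\frak m)/I^nM\big)$; the degree bound then comes from $\dim_{G(I)}\big(0:_{G_M(I)}\frak m G(I)\big)\le\dim_{G(I)}\big(G_M(I)/\frak m G_M(I)\big)=\ell_M(I)$. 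You instead split $\mathrm{Soc}(M/I^nM)$ via the long exact sequence into $\mathrm{Soc}(M)$ plus $W_n\subseteq\mathrm{Ext}^1_R(\frak k,I^nM)$, prove finite generation of $\bigoplus_n\mathrm{Ext}^1_R(\frak k,I^nM)$ over the Rees algebra by resolving $\frak k$ by finite free modules, and bound $\dim W$ by $\ell_M(I)$ through the support of the fiber module $\mathcal{R}(I;M)/\frak m\mathcal{R}(I;M)\cong G_M(I)/\frak m G_M(I)$; all of these steps check out ($W$ is indeed an $\mathcal{R}$-submodule killed by $\frak m$, hence a finitely generated graded $F(I)$-module). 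In effect you re-prove, in homological form, the asymptotic stability that the paper imports from Schenzel, which makes your proof self-contained; the paper's route is shorter given the citation and yields the slightly sharper statement that the degree equals $\dim_{G(I)}\big(0:_{G_M(I)}\frak m G(I)\big)-1$, which your decomposition only bounds. For part (ii) the two arguments are essentially identical: your injective-hull estimate over $R/I$ is just another phrasing of the paper's inequality $\lambda_R(\mathrm{Hom}_R(R/I,N))\le\lambda_R(R/I)\,\lambda_R(\mathrm{Hom}_R(\frak k,N))$ applied to $N=I^nM/I^{n+1}M$, combined with the Hilbert--Samuel degree $d-1$; your explicit remark that $\ell_M(I)=\dim_RM$ for $\frak m$-primary $I$ fills in a point the paper leaves implicit.
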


  \begin{proof}

(i) Consider the homogeneous submodule $0:_{G_M(I)}{\frak m}G(I)$. Then 
$$\lambda_R(0:_{G_M(I)}{\frak m}G(I))_n=\lambda_R(((I^{n+1}M:{\frak m})\cap I^nM)/I^{n+1}M)$$ is a polynomial for $n\gg 0$. Using a result proved by P. Schenzel \cite[Proposition 2.1]{S98}, proved  for rings but  easily extendible to  modules, we find a positive integer $l$ such that for all $n\ge l$, $0:_M{\frak m}\cap I^nM=0$ and
$$I^{n+1}M:{\frak m}= I^{n+1-l}(I^l M:{\frak m})+ 0:_M{\frak m}.$$
Therefore 
$$
\begin{aligned}
(I^{n+1}M:{\frak m})\cap I^nM&=I^{n+1-l}(I^l M:{\frak m})+0:_M{\frak m}\cap I^nM\\
&=I^{n+1-l}(I^l M:{\frak m}).
\end{aligned}
$$
Hence, $\lambda_R(I^{n+1-l}(I^l M:{\frak m})/I^{n+1}M)=\lambda_R(((I^{n+1}M:{\frak m})\cap I^nM)/I^{n+1}M)$ is a polynomial for $n\gg 0$. It follows that
$$\dim_{\frak k}\mathrm{Soc}(M/I^nM) =\lambda_R((I^n M:{\frak m})/I^{n}M)=\lambda_R(I^{n-l}(I^l M:{\frak m})/I^{n}M)+\lambda_R(0:_M{\frak m})$$
is a polynomial for $n\gg 0$, and the degree of this polynomial is just equal to
$$\dim_{G(I)}(0:_{G_M(I)}{\frak m}G(I))-1\le \dim_{G(I)}({G_M(I)}/{\frak m}G_M(I))-1=\ell_M(I)-1.$$
(ii) The second statement follows from the first one and the fact that
$$
\begin{aligned}
\lambda_R(I^nM/I^{n+1}M) &=\lambda_R(\mathrm{Hom}_R(R/I,I^nM/I^{n+1}M))\\
 &\le \lambda_R(R/I)\lambda_R(\mathrm{Hom}_R(R/{\frak m},I^nM/I^{n+1}M)) \le \lambda_R(R/I)\mathrm{ir}_M(I^{n+1}M).
\end{aligned}
$$
\end{proof}

We are now able to prove Theorem \ref{T4.1}.

\begin{proof}[Proof of Theorem \ref{T4.1}] Let $\text{A}_M(I)$ denote  the stable set  $\mathrm{Ass}_R(M/I^nM)$ for $n\gg 0$.   Then, by Lemma \ref{L2.3} we get that
$$\mathrm{ir}_M(I^nM)=\sum\limits_{\frak p\in A_M(I)}\dim_{k({\frak p})}\mathrm{Soc}(M/I^nM)_{\frak p}$$ for all $n\gg 0$.
From Lemma \ref{L4.2}, (i), $\dim_{k({\frak p})}\mathrm{Soc}(M/I^nM)_{\frak p}$ is a polynomial of degree $\le \ell_{M_{\frak p}}(IR_{\frak p})-1$ for  $n\gg0$. Therefore there exists a polynomial $\mathrm{Ir}_{M,I}(n)$ of  such that $\mathrm{Ir}_{M,I}(n)=\mathrm{ir}_M(I^nM)$ for $n\gg 0$ and 
$$\mathrm{deg}(\mathrm{Ir}_{M,I}(n))\le \max \{\ell_{M_{\frak p}}(IR_{\frak p})-1\mid \frak p \in A_M(I)\}\le \ell_M(I)-1.$$
Let $\mathrm{Min}(M/IM)=\{{\frak p_1},\ldots,{\frak p_m}\}$ be the set of all minimal associated prime ideals of $IM$. It is clear that ${\frak p_i}$ is also minimal in $A_M(I)$. Hence $\Lambda_{\frak p_i}(I^nM)$ has only one element, says $Q_{in}$. It is easy to check that 
$$\mathrm{ir}_M(Q_{in})={\rm ir}_{M_{\frak p_i}}(Q_{in})_{\frak p_i}=\mathrm{ir}_{M_{\frak p_i}}(I^nM_{\frak p_i})$$ for $i=1, \ldots , m$.  This implies by Theorem \ref{T3.2} that
$\mathrm{ir}_M(I^nM)\ge \sum\limits_{i=1}^m \mathrm{ir}_{M_{\frak p_i}}(I^nM_{\frak p_i})$. It follows from Lemma \ref{L4.2}, (ii) for  $n\gg 0$ that
$$\mathrm{deg}(\mathrm{Ir}_{M,I}(n))\ge \max\{\dim_{R_{\frak p_i} }M_{\frak p_i}-1\mid i=1, \ldots ,m\}=\mathrm{bight}_M(I)-1.$$
\end{proof}

 The following corollaries are immediate consequences of Theorem \ref{T4.1}.
 An ideal $I$ of a local ring $R$ is called an equimultiple ideal if  $\ell(I)=\mathrm{ht}(I)$, and therefore $\mathrm{bight}_R(I)=\mathrm{ht}(I)$.

\begin{corollary} \label{C4.3} Let $I$ be an ideal of $R$ satisfying $\ell_M(I)= \mathrm{bight}_M(I)$. Then $$\deg(\mathrm{Ir}_{M,I}(n))=\ell_M(I)-1.$$
\end{corollary}

\begin{corollary} \label{C4.4} Let $I$ be an equimultiple ideal of a local ring $R$ with the unique maximal ideal $\frak m$. Then  $$\mathrm{deg}(\mathrm{Ir}_{R,I}(n))=\mathrm{ht}(I)-1$$. 
\end{corollary}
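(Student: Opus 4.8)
The plan is to derive this corollary from Corollary \ref{C4.3} by verifying that the hypothesis $\ell_R(I) = \mathrm{bight}_R(I)$ holds automatically for an equimultiple ideal $I$ of a local ring $R$, and that in that case the common value is $\mathrm{ht}(I)$. First I would recall the general inequalities that always hold for an ideal $I$ of a Noetherian local ring: one has $\mathrm{ht}(I) \le \mathrm{bight}_R(I) \le \ell_R(I) \le \dim R$. The first inequality is immediate from the definition of big height as a maximum over minimal primes of $I$, each of which has height at least $\mathrm{ht}(I)$; the inequality $\mathrm{bight}_R(I) \le \ell_R(I)$ is a standard fact (for any minimal prime $\frak p$ of $I$ one has $\dim R_{\frak p} = \mathrm{ht}(IR_{\frak p}) \le \ell_{R_{\frak p}}(IR_{\frak p}) \le \ell_R(I)$, using that analytic spread is at least the height of the ideal and localizes well).

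Next, by the defining property of an equimultiple ideal, $\ell_R(I) = \mathrm{ht}(I)$. Combining this with the chain of inequalities above forces
$$\mathrm{ht}(I) = \mathrm{bight}_R(I) = \ell_R(I),$$
so in particular $\ell_R(I) = \mathrm{bight}_R(I)$, which is precisely the hypothesis of Corollary \ref{C4.3} applied with $M = R$. That corollary then yields $\deg(\mathrm{Ir}_{R,I}(n)) = \ell_R(I) - 1 = \mathrm{ht}(I) - 1$, as claimed.

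The only point requiring a little care — and the one I would treat as the main (though still minor) obstacle — is to confirm that for $M = R$ the quantity $\ell_M(I)$ in the sense defined in this section coincides with the classical analytic spread $\ell(I) = \dim_{G(I)}(G(I)/\frak m G(I))$ used in the definition of an equimultiple ideal, and similarly that $\mathrm{bight}_R(I)$ as defined here (a maximum of $\dim R_{\frak p}$ over $\frak p \in \mathrm{Ass}_R(R/IR)$ minimal) agrees with the usual big height $\max\{\mathrm{ht}(\frak p) : \frak p \in \mathrm{Min}(R/I)\}$. Both identifications are straightforward: in the local case the defining maximum for $\ell_M(I)$ is taken over the single maximal ideal and reduces to $\ell(I)$, and $\dim R_{\frak p} = \mathrm{ht}(\frak p)$ for any prime. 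Once these bookkeeping matters are settled, the corollary is an immediate specialization of Corollary \ref{C4.3}.
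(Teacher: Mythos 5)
Your proposal is correct and follows the paper's intended route: the paper defines an equimultiple ideal by $\ell(I)=\mathrm{ht}(I)$ and notes this forces $\mathrm{bight}_R(I)=\mathrm{ht}(I)$ via the standard chain $\mathrm{ht}(I)\le\mathrm{bight}_R(I)\le\ell(I)$, so the corollary is exactly the specialization of Corollary \ref{C4.3} to $M=R$ that you carry out. The extra bookkeeping you include (that $\ell_M(I)$ reduces to $\ell(I)$ and $\dim R_{\frak p}=\mathrm{ht}(\frak p)$ when $M=R$) is fine and consistent with the paper, which treats the statement as an immediate consequence of Theorem \ref{T4.1}.
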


Excepting the corollaries above, the authors of the paper do not know how to compute exactly the degree of the polynomial of index of reducibility $\mathrm{Ir}_{M,I}(n)$. Therefore it is maybe interesting to find a formula for this degree in terms of known invariants associated to $I$ and $M$. Below we give examples to show that although these bounds are sharp, neither  $\mathrm{bight}_M(I)-1$ nor  $\ell_M(I)-1$ equal to $\mathrm{deg}(\mathrm{Ir}_{M,I}(n))$ in general.

\begin{example} \label{E4.5}{\rm
(1) Let $R=K[X,Y]$ be the polynomial ring of two variables $X$, $Y$ over a field $K$ and $I=(X^2,XY)=X(X,Y)$ an ideal of $R$. Then we have
$$\mathrm{bight}_R(I)=\mathrm{ht}(I)=1, \text{  }\ell(I)=2,$$ and by Lemma \ref{L2.3}
$$\mathrm{ir}_R(I^n)=\mathrm{ir}_R(X^n(X,Y)^n)=\mathrm{ir}_R((X,Y)^n) +1=n+1.$$
Therefore 
$$\mathrm{bight}_R(I)-1=0 <1= \mathrm{deg}(\mathrm{Ir}_{R,I}(n))=\ell (I)-1.$$

(2) Let $T=K[X_1,X_2,X_3,X_4,X_5,X_6]$ be the polynomial ring in six variables over a field $K$ and $R=T_{(X_1,\ldots,X_6)}$ the localization of $T$ at the homogeneous maximal ideal $(X_1,\ldots,X_6)$. Consider the monomial ideal
$$
\begin{aligned}
I&=(X_1X_2,X_2X_3,X_3X_4,X_4X_5,X_5X_6,X_6X_1)= (X_1,X_3,X_5)\cap (X_2,X_4,X_6)\cap \\
&\cap (X_1,X_2,X_4,X_5) \cap (X_2,X_3,X_5,X_6)\cap (X_3,X_4,X_6,X_1).
\end{aligned}
$$
Since the associated graph to this monomial ideal is a bipartite graph, it follows from \cite[Theorem 5.9]{SVV94} that
$\mathrm{Ass}(R/I^n)=\mathrm{Ass}(R/I)=\mathrm{Min}(R/I)$ for all $n\geq 1$. Therefore $\mathrm{deg}(\mathrm{Ir}_{R,I}(n))= \mathrm{bight}(I)-1= 3$ by Theorem \ref{T3.2} and Lemma \ref{L4.2} (ii). On the other hand, by \cite[Exercise 8.21]{HS06} $\ell(I)=5$, so
$$\mathrm{deg}(\mathrm{Ir}_{R,I}(n))=3< 4=\ell(I)-1.$$
}
\end{example}

Let $I$ be an ideal of $R$ and $n$ a positive integer. The $n$th symbolic power $I^{(n)}$ of $I$ is defined by
$$I^{(n)} = \bigcap_{\frak p \in \mathrm{Min}(I)}(I^nR_{\frak p}\cap R),$$ where ${\rm Min} (I)$ is the set of all minimal associated prime ideals in Ass$(R/I)$. Contrary to the function ${\rm ir}(I^n)$, the behaviour of the function ${\rm ir}(I^{(n)})$ seems to be better.
\begin{proposition} \label{C4.2}
Let $I$ be an ideal of $R$. Then there exists a polynomial $p_I(n)$ with rational coefficients that such
$p_I(n)={\rm ir}_R (I^{(n)}) $ for  sufficiently large $n$ and $${\rm deg}(p_I(n))={\rm bight}(I)-1.$$
\end{proposition}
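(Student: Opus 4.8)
The plan is to reduce the symbolic power to ordinary powers after localizing at the minimal primes, so that the machinery of Theorem~\ref{T4.1} and Lemma~\ref{L4.2}(ii) applies directly. First I would observe that, by definition of $I^{(n)}$, one has $\mathrm{Ass}_R(R/I^{(n)}) = \mathrm{Min}(I)$ for every $n \ge 1$; indeed $I^{(n)}$ is by construction the intersection of the primary components of $I^n$ belonging to the minimal primes, so it has no embedded components. Write $\mathrm{Min}(I) = \{\frak p_1, \ldots, \frak p_m\}$. Applying Lemma~\ref{L2.3} with $M = R$ and $N = I^{(n)}$ gives
$$\mathrm{ir}_R(I^{(n)}) = \sum_{i=1}^m \dim_{k(\frak p_i)} \mathrm{Soc}(R/I^{(n)})_{\frak p_i}.$$
The key point is then that localization at $\frak p_i$ turns the symbolic power back into an ordinary power: $(I^{(n)})_{\frak p_i} = I^n R_{\frak p_i}$, because $\frak p_i$ is minimal over $I$ and the other components of $I^{(n)}$ (those attached to $\frak p_j$ with $j \ne i$) are not contained in $\frak p_i$, hence become the unit ideal after localization. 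Consequently $(R/I^{(n)})_{\frak p_i} \cong R_{\frak p_i}/I^n R_{\frak p_i}$.

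Next I would apply Lemma~\ref{L4.2}(ii) to the local ring $R_{\frak p_i}$, noting that $I R_{\frak p_i}$ is $\frak p_i R_{\frak p_i}$-primary precisely because $\frak p_i$ is a minimal prime of $I$. That lemma yields that $\dim_{k(\frak p_i)} \mathrm{Soc}(R_{\frak p_i}/I^n R_{\frak p_i})$ agrees, for $n \gg 0$, with a polynomial in $n$ of degree exactly $\dim R_{\frak p_i} - 1 = \mathrm{ht}(\frak p_i) - 1$. Summing over $i$, there is a polynomial $p_I(n)$ with rational coefficients such that $p_I(n) = \mathrm{ir}_R(I^{(n)})$ for $n \gg 0$, and since the sum of polynomials of distinct or equal degrees has degree equal to the maximum of the degrees (there is no cancellation of leading terms, as all leading coefficients are positive — they are leading coefficients of Hilbert-type polynomials), we get
$$\deg(p_I(n)) = \max_{1 \le i \le m} (\mathrm{ht}(\frak p_i) - 1) = \mathrm{bight}(I) - 1,$$
the last equality being the definition of $\mathrm{bight}(I)$ (the big height on $R$, i.e. the maximum height of a minimal prime of $I$).

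The only genuine subtlety — and the step I expect to need the most care — is the interchange of localization with the socle/Soc functor and the verification that $(I^{(n)})_{\frak p_i} = I^n R_{\frak p_i}$, together with the claim that no cancellation occurs among the leading terms when summing the $m$ polynomials. The localization identity is standard once one unwinds the definition of symbolic power, but it should be stated carefully. The absence of leading-term cancellation is automatic here because each summand $\dim_{k(\frak p_i)}\mathrm{Soc}(R_{\frak p_i}/I^nR_{\frak p_i})$ is eventually a polynomial whose leading coefficient is strictly positive (it dominates, up to the constant $\lambda_{R_{\frak p_i}}(R_{\frak p_i}/IR_{\frak p_i})^{-1}$, the Hilbert polynomial $\lambda(I^nR_{\frak p_i}/I^{n+1}R_{\frak p_i})$ by the chain of inequalities in the proof of Lemma~\ref{L4.2}(ii)), so the maximum-degree terms add rather than cancel. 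With these observations in place the proposition follows.
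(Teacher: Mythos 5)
Your proof is correct and follows essentially the same route as the paper: both note that $\mathrm{Ass}_R(R/I^{(n)})=\mathrm{Min}(I)$ for all $n$, reduce $\mathrm{ir}_R(I^{(n)})$ to a sum over the minimal primes of $\mathrm{ir}_{R_{\frak p}}(I^nR_{\frak p})$, and conclude with Lemma \ref{L4.2}(ii). The only cosmetic difference is that you get the sum formula from Lemma \ref{L2.3} together with the identity $(I^{(n)})_{\frak p}=I^nR_{\frak p}$, while the paper invokes Theorem \ref{T3.2} and the argument in the proof of Theorem \ref{T4.1}; the content is the same.
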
 \label{C4.2}
\begin{proof} It should be mentioned that Ass$(R/I^{(n)})={\rm Min} (I)$ for all positive integer $n$. Thus, by virtue of Theorem \ref{T3.2}, we can show as in the proof of Theorem \ref{T4.1} that 
$${\rm ir} _R(I^{(n)})=\sum\limits_{\frak p\in {\rm Min}(I)}\mathrm{ir}_{R_{\frak p}}(I^nR_{\frak p})$$ for all $n$. So the proposition follows from Lemma \ref{L4.2}, (ii).
\end{proof}

\section{Index of reducibility in Cohen-Macaulay modules}
In this section, we assume in addition that $R$ is a local ring with the unique maximal ideal $\frak m$, and $\frak k = R/\frak m$ is the residue field. Let $\frak q=(x_1,\ldots, x_d)$ be a parameter ideal of $M$ ($d=\dim M$). Let $H^i({\frak q}, M)$ be the $i$-th Koszul cohomology module of $M$ with respect to $\frak q$ and $H^i_{\frak m}(M)$ the $i$-th local cohomology module of $M$ with respect to the maximal ideal $\frak m$. In order to state the next theorem, we need the following result of Goto and Sakurai \cite[Lemma 3.12]{GS03}.

\begin{lemma}\label{L4.6}
There exists a positive integer $l$ such that for all parameter ideals $\frak q$ of $M$ contained in $\frak m^l$, the canonical homomorphisms on socles
$$\mathrm{Soc}(H^i({\frak q}, M))\to \mathrm{Soc}(H^i_{\frak m}(M))$$
are surjective for all $i$.
 \end{lemma}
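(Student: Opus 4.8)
The plan is to detect every socle element of $H^i_{\frak m}(M)$ already at the first Koszul stage of a sufficiently deep parameter ideal, by transporting the question to one fixed system of parameters. Write a parameter ideal as $\frak q=(x_1,\dots,x_d)$ and put $\frak q^{[n]}=(x_1^n,\dots,x_d^n)$; since $\sqrt{\frak q}=\frak m$ we have $H^i_{\frak m}(M)=H^i_{\frak q}(M)=\varinjlim_n H^i(\frak q^{[n]},M)$, and the canonical homomorphism in the statement is the structure map $\psi^i_{\frak q}\colon H^i(\frak q,M)\to H^i_{\frak m}(M)$ of this colimit at stage $n=1$ --- equivalently, the map induced by the natural morphism from the Koszul complex $K^{\bullet}(\underline x,M)$ to the \v{C}ech complex $\check{C}^{\bullet}(\underline x,M)$ computing $H^{\bullet}_{\frak m}(M)$.

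First I would record two standard facts. (a) Since $\frak k=R/\frak m$ is finitely presented, $\mathrm{Hom}_R(\frak k,-)$ commutes with the filtered colimit above, so $\mathrm{Soc}(H^i_{\frak m}(M))=\varinjlim_n\mathrm{Soc}(H^i(\frak q^{[n]},M))$; moreover $H^i_{\frak m}(M)$ is Artinian, whence its socle is a finite-dimensional $\frak k$-vector space that vanishes for $i\notin\{0,\dots,d\}$. Because the images of the structure maps increase and $\mathrm{Soc}(H^i_{\frak m}(M))$ is finite dimensional, it follows that after fixing any one system of parameters $\underline y=y_1,\dots,y_d$ there is an integer $n_0$ (depending only on $\underline y$ and $M$) with $\mathrm{Soc}(H^i(\underline y^{[n_0]},M))\to\mathrm{Soc}(H^i_{\frak m}(M))$ surjective for all $i$. (b) If $(a_1,\dots,a_d)\subseteq(b_1,\dots,b_d)$, choosing a matrix $C=(c_{jk})$ with $a_j=\sum_k c_{jk}b_k$ produces a comparison morphism of Koszul cochain complexes $K^{\bullet}(\underline b,M)\to K^{\bullet}(\underline a,M)$ extending $\mathrm{id}_M$ in cohomological degree $0$; for $\underline a=\underline b^{[m]}$ (so that $C$ is diagonal) it recovers the transition maps of the colimit in~(a). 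As this comparison morphism is compatible with the Koszul-to-\v{C}ech maps, one obtains $\psi^i_{(\underline b)}=\psi^i_{(\underline a)}\circ\theta^i$, where $\theta^i\colon H^i(\underline b,M)\to H^i(\underline a,M)$ is the induced map; passing to socles this shows that a deeper parameter ideal has the larger socle image:
$$\mathrm{Im}\big(\mathrm{Soc}\,H^i(\underline a,M)\to\mathrm{Soc}\,H^i_{\frak m}(M)\big)\supseteq\mathrm{Im}\big(\mathrm{Soc}\,H^i(\underline b,M)\to\mathrm{Soc}\,H^i_{\frak m}(M)\big)\quad\text{whenever }(\underline a)\subseteq(\underline b).$$

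Now choose $\underline y$ and $n_0$ as in~(a) and pick $l$ with $\frak m^l\subseteq(\underline y^{[n_0]})$, which is possible because $(\underline y^{[n_0]})$ is $\frak m$-primary. Then for \emph{every} parameter ideal $\frak q=(x_1,\dots,x_d)$ with $\frak q\subseteq\frak m^l$ we have $(\underline x)\subseteq(\underline y^{[n_0]})$, so by~(b) the socle image of $\psi^i_{\frak q}$ contains the socle image of $\psi^i_{(\underline y^{[n_0]})}$, which is all of $\mathrm{Soc}(H^i_{\frak m}(M))$ by the choice of $n_0$. Hence $\mathrm{Soc}(H^i(\frak q,M))\to\mathrm{Soc}(H^i_{\frak m}(M))$ is surjective for all $i$, and the same $l$ works for every such $\frak q$; this is exactly the assertion of the lemma.

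The step I expect to be the real obstacle is the compatibility asserted in~(b). The comparison morphism $K^{\bullet}(\underline b,M)\to K^{\bullet}(\underline a,M)$ for $(\underline a)\subseteq(\underline b)$ is routine to produce, but one must verify that it agrees --- up to homotopy, hence on cohomology --- with the canonical morphisms from these Koszul complexes to the \v{C}ech complex computing $H^{\bullet}_{\frak m}(M)$; equivalently, one must check that the Koszul cohomologies $H^i(\frak a,M)$, indexed by parameter ideals $\frak a$ (with chosen generators) ordered by reverse inclusion and linked by these comparison maps, form a direct system with colimit $H^i_{\frak m}(M)$ and structure maps $\psi^i_{\frak a}$, refining the usual $H^i_{\frak m}(M)=\varinjlim_n H^i(\frak a^{[n]},M)$. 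Granting this cofinality statement the proof is purely formal; alternatively one can argue directly inside one \v{C}ech complex, representing a basis of $\mathrm{Soc}(H^i_{\frak m}(M))$ by \v{C}ech cocycles with bounded denominators, absorbing those denominators into $\underline y^{[n_0]}$, and using $(\underline x)\subseteq(\underline y^{[n_0]})$ and the explicit comparison map to rewrite the corresponding classes as images of socle elements of $H^i(\frak q,M)$.
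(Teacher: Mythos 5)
The paper itself does not prove this lemma: it is quoted directly from Goto--Sakurai \cite[Lemma 3.12]{GS03}, so there is no in-paper argument to compare against. Your proposal is, in substance, the standard proof of that result and it is correct in outline: each $H^i_{\frak m}(M)$ is Artinian, so $\mathrm{Soc}(H^i_{\frak m}(M))$ is a finite-dimensional $\frak k$-vector space; since $\mathrm{Hom}_R(\frak k,-)$ commutes with filtered colimits and $H^i_{\frak m}(M)=\varinjlim_n H^i(\underline y^{[n]},M)$ for one fixed system of parameters $\underline y$, a single stage $n_0$ already covers the socle for all $i$ (only $0\le i\le d$ matter); choosing $l$ with $\frak m^l\subseteq(\underline y^{[n_0]})$ and pushing the socle image down along the comparison map $H^i(\underline y^{[n_0]},M)\to H^i(\frak q,M)$ for any parameter ideal $\frak q\subseteq\frak m^l$ gives the assertion, with the same $l$ for every such $\frak q$. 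The one point you leave open is the one you flag yourself: that the comparison maps between Koszul complexes are independent of the chosen matrix on cohomology (different lifts are homotopic), compose correctly, and are compatible with the canonical maps to $H^i_{\frak m}(M)$, so that the Koszul cohomologies of parameter ideals form a direct system with colimit $H^i_{\frak m}(M)$ in which the fixed sequence $\{\underline y^{[n]}\}_n$ is cofinal. This is true and standard, but it is essentially the entire technical content of the lemma, so a complete write-up should either verify it or cite it precisely (it is part of the setup in \cite{GS03}, and appears in standard treatments of Koszul versus \v{C}ech/local cohomology). An alternative that reduces the homotopy bookkeeping, and is the route of \cite{MRS08}, is to work with $H^i_{\frak m}(M)=\varinjlim_n\mathrm{Ext}^i_R(R/\frak m^n,M)$, whose transition maps are canonical: for $\frak q\subseteq\frak m^l$ the surjection $R/\frak q\twoheadrightarrow R/\frak m^l$ gives a choice-free map $\mathrm{Ext}^i_R(R/\frak m^l,M)\to\mathrm{Ext}^i_R(R/\frak q,M)$ compatible with the maps to $H^i_{\frak m}(M)$, and one then transfers the socle statement to $H^i(\frak q,M)$ via the canonical map $\mathrm{Ext}^i_R(R/\frak q,M)\to H^i(\frak q,M)$ obtained by lifting the Koszul complex to a free resolution of $R/\frak q$.
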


\begin{theorem} \label{T4.7} Let $M$ be a finitely generated $R$-module of $\dim M=d$. Then the following conditions are equivalent:
\begin{enumerate}[{(i)}]\rm
    \item {\it $M$ is a Cohen-Macaulay module.}
    \item {\it $\mathrm{ir}_M({\frak q}^{n+1}M)=\dim_{\frak k}\mathrm{Soc}(H^d_{\frak m}(M)) \binom{n+d-1}{d-1}$ for all parameter ideals $\frak q$ of $M$ and all $n \geq 0$.}
    \item {\it  $\mathrm{ir}_M({\frak q}M)=\dim_{\frak k} \mathrm{Soc}(H^d_{\frak m}(M))$ for all parameter ideals $\frak q$ of $M$.}
    \item {\it There exists a parameter ideal $\frak q$ of $M$ contained in $\frak m^l$, where $l$ is a positive integer as in Lemma \ref{L4.6}, such that $\mathrm{ir}_M({\frak q}M)=\dim_{\frak k}\mathrm{Soc}(H^d_{\frak m}(M))$.}
  \end{enumerate}
\end{theorem}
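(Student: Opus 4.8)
The plan is to prove the cycle of implications $(i)\Rightarrow(ii)\Rightarrow(iii)\Rightarrow(iv)\Rightarrow(i)$. The implications $(ii)\Rightarrow(iii)$ and $(iii)\Rightarrow(iv)$ are trivial: specialize $n=0$ for the first, and note that a parameter ideal contained in $\frak m^l$ exists (take a sufficiently high power of a fixed system of parameters) for the second. So the real content is $(i)\Rightarrow(ii)$ and $(iv)\Rightarrow(i)$.

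For $(i)\Rightarrow(ii)$, assume $M$ is Cohen--Macaulay and let $\frak q=(x_1,\dots,x_d)$ be any parameter ideal. Since $x_1,\dots,x_d$ is a regular sequence on $M$, the module $G_M(\frak q)$ is isomorphic to a polynomial ring in $d$ variables over $M/\frak qM$, and more to the point $\frak q^{n+1}M:_M\frak m$ can be computed using this regularity. The cleanest route is: because $x_1,\dots,x_d$ is $M$-regular, one has $\frak q^{n+1}M:_M x_i=\frak q^{n+1}M:_M\frak q$ and, iterating, the colon module $\frak q^{n+1}M:_M\frak m$ sits inside $\frak q^{n+1-?}M$ in a controlled way; concretely, for a Cohen--Macaulay module $\mathrm{ir}_M(\frak q^{n+1}M)=\dim_{\frak k}\mathrm{Soc}(M/\frak q^{n+1}M)$ by Remark 2.2, and $M/\frak q^{n+1}M$ has a filtration (coming from $\frak q^jM/\frak q^{j+1}M\cong (M/\frak qM)^{\binom{j+d-1}{d-1}}$) whose socle I can pin down because $\frak q\,\mathrm{Soc}(M/\frak qM)=0$ forces every socle element of $M/\frak q^{n+1}M$ into $\frak q^nM/\frak q^{n+1}M$. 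This gives $\mathrm{ir}_M(\frak q^{n+1}M)=\binom{n+d-1}{d-1}\,\mathrm{ir}_M(\frak qM)=\binom{n+d-1}{d-1}\dim_{\frak k}\mathrm{Soc}(M/\frak qM)$, and finally $\mathrm{Soc}(M/\frak qM)\cong\mathrm{Soc}(H^d(\frak q,M))\cong\mathrm{Soc}(H^d_{\frak m}(M))$ since for a Cohen--Macaulay module $H^d(\frak q,M)\cong M/\frak qM$ and the Goto--Sakurai map of Lemma 5.2 is an isomorphism (injectivity is automatic here as $M$ is CM).

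For $(iv)\Rightarrow(i)$, this is the heart of the matter. Fix a parameter ideal $\frak q\subseteq\frak m^l$ with $\mathrm{ir}_M(\frak qM)=\dim_{\frak k}\mathrm{Soc}(H^d_{\frak m}(M))$. By Remark 2.2, $\mathrm{ir}_M(\frak qM)=\dim_{\frak k}\mathrm{Soc}(M/\frak qM)=\dim_{\frak k}\mathrm{Soc}(H^d(\frak q,M))$ since $H^d(\frak q,M)=M/\frak qM$ for the top Koszul cohomology. The general fact (valid for any $M$) is that $\dim_{\frak k}\mathrm{Soc}(H^i(\frak q,M))\ge\dim_{\frak k}\mathrm{Soc}(H^i_{\frak m}(M))$ with equality for all $i$ iff the Koszul-to-local cohomology maps are isomorphisms; our hypothesis gives equality at $i=d$, and Lemma 5.2 gives surjectivity, hence the map $\mathrm{Soc}(H^d(\frak q,M))\to\mathrm{Soc}(H^d_{\frak m}(M))$ is an isomorphism. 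Now I invoke the standard relation between Koszul and local cohomology: there is a length inequality $\lambda_R(M/\frak qM)\ge\sum_{i=0}^d\lambda_R(H^i_{\frak m}(M)/\dots)$-type bound, but the sharp tool I want is that $\mathrm{ir}_M(\frak qM)\ge\sum_{i=0}^d\dim_{\frak k}\mathrm{Soc}(H^i_{\frak m}(M))$ in general, with equality forcing $H^i_{\frak m}(M)=0$ for $i<d$. Indeed, using the self-duality of the Koszul complex and the surjections of Lemma 5.2 on every $H^i$, one shows $\mathrm{Soc}(M/\frak qM)$ surjects onto $\bigoplus_{i=0}^d\mathrm{Soc}(H^i_{\frak m}(M))$ (this is essentially in Goto--Sakurai); then the hypothesis $\mathrm{ir}_M(\frak qM)=\dim_{\frak k}\mathrm{Soc}(H^d_{\frak m}(M))$ forces $\mathrm{Soc}(H^i_{\frak m}(M))=0$, hence $H^i_{\frak m}(M)=0$, for all $i<d$, i.e.\ $M$ is Cohen--Macaulay.

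The main obstacle is making the inequality $\mathrm{ir}_M(\frak qM)\ge\sum_{i=0}^d\dim_{\frak k}\mathrm{Soc}(H^i_{\frak m}(M))$ (for $\frak q$ deep enough, as in Lemma 5.2) precise and correctly attributed; the clean way is to filter the Koszul complex $K_\bullet(\frak q;M)$ and chase socles through the resulting spectral sequence, or — more in the spirit of this paper — to realize each $\mathrm{Soc}(H^i_{\frak m}(M))$ as a subquotient of $\mathrm{Soc}(M/\frak qM)$ via the surjections of Lemma 5.2 combined with the fact that the natural maps $H^i(\frak q,M)\to H^i_{\frak m}(M)$ assemble compatibly as $\frak q$ shrinks. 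I expect the bookkeeping of which socle summand lands where, and verifying the sum is direct (not just that each term injects), to be the delicate point; the Cohen--Macaulay direction $(i)\Rightarrow(ii)$, by contrast, is a routine associated-graded computation once Remark 2.2 and the regularity of the $x_i$ are in hand.
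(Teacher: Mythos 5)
Your overall strategy (prove (i)$\Rightarrow$(ii) by an associated-graded computation, dispose of (ii)$\Rightarrow$(iii)$\Rightarrow$(iv) trivially, and concentrate on (iv)$\Rightarrow$(i)) matches the paper, and your (i)$\Rightarrow$(ii) is essentially the paper's argument: the key points $\frak q^{n+1}M:_M\frak m\subseteq\frak q^{n+1}M:_M\frak q=\frak q^nM$ and $\frak q^nM/\frak q^{n+1}M\cong(M/\frak qM)^{\binom{n+d-1}{d-1}}$ are exactly what is used there. One repair is needed, though: statement (ii) is about \emph{all} parameter ideals, so you cannot invoke Lemma \ref{L4.6} (which requires $\frak q\subseteq\frak m^l$) to identify $\dim_{\frak k}\mathrm{Soc}(M/\frak qM)$ with $\dim_{\frak k}\mathrm{Soc}(H^d_{\frak m}(M))$; the paper gets this equality for every parameter ideal by induction on $d$, passing to $\overline M=M/x_1M$ (alternatively, note both numbers equal the Cohen--Macaulay type $\dim_{\frak k}\mathrm{Ext}^d_R(\frak k,M)$). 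This is a fixable slip.

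The genuine gap is in (iv)$\Rightarrow$(i). Your argument stands or falls with the claim that, for $\frak q\subseteq\frak m^l$, the socle $\mathrm{Soc}(M/\frak qM)$ surjects onto $\bigoplus_{i=0}^{d}\mathrm{Soc}(H^i_{\frak m}(M))$, i.e.\ $\mathrm{ir}_M(\frak qM)\ge\sum_{i=0}^{d}\dim_{\frak k}\mathrm{Soc}(H^i_{\frak m}(M))$, which you attribute to Goto--Sakurai. But Lemma \ref{L4.6} (Goto--Sakurai's Lemma 3.12) only gives, for each $i$ separately, a surjection $\mathrm{Soc}(H^i(\frak q,M))\to\mathrm{Soc}(H^i_{\frak m}(M))$; for $i<d$ the Koszul cohomology $H^i(\frak q,M)$ is neither a submodule nor a quotient of $M/\frak qM=H^d(\frak q,M)$ in any socle-compatible way, so these surjections do not assemble into the statement you need, and you yourself flag the bookkeeping and the directness of the sum as unresolved. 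Statements of this type are known only with extra input (e.g.\ for generalized Cohen--Macaulay modules, with coefficients $\binom{d}{i}$, via the splitting theorem of \cite{CQ11}, cf.\ the paper's closing remark), and establishing even the inequality you want in full generality is a substantial undertaking, not a routine spectral-sequence chase. The paper avoids it entirely: it works with the limit closure $(\underline{x})_M^{\mathrm{lim}}=\bigcup_{t\ge 0}\big((x_1^{t+1},\ldots,x_d^{t+1})M:_M(x_1\cdots x_d)^{t}\big)$, observes that $(\underline{x})_M^{\mathrm{lim}}/\frak qM$ is precisely the kernel of the canonical map $M/\frak qM\to H^d_{\frak m}(M)$, uses Lemma \ref{L4.6} together with the hypothesis $\mathrm{ir}_M(\frak qM)=\dim_{\frak k}\mathrm{Soc}(H^d_{\frak m}(M))$ to force this kernel (a finite-length module) to have zero socle and hence to vanish, and then concludes Cohen--Macaulayness from the criterion $(\underline{x})_M^{\mathrm{lim}}=\frak qM$ of \cite[Corollary 2.4]{CHL99}. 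Only the top local cohomology and the single surjection at $i=d$ are ever used. As written, your (iv)$\Rightarrow$(i) is incomplete; either supply a proof of the claimed socle surjection or replace it by the limit-closure argument.
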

\begin{proof}
(i) $\Rightarrow$ (ii) Let $\frak q$ be a parameter ideal of $M$. Since $M$ is Cohen-Macaulay, we have a natural isomorphism of graded modules

$$G_M(\frak q)=\bigoplus\limits_{n\ge 0}\frak q^nM/\frak q^{n+1}M\to M/\frak q M[T_1,\ldots,T_d],$$ where $T_1,\ldots,T_d$ are indeterminates.
This deduces $R$-isomomorphisms on graded parts
$$\frak q^nM/\frak q^{n+1}M\to\big( M/\frak q M[T_1,\ldots,T_d]\big)_n\cong M/\frak q M^{\binom{n+d-1}{d-1}}$$ for all $n\geq 0$.
On the other hand, since $\frak q$ is a parameter ideal of a Cohen-Macaulay module, $\frak q^{n+1}M:\frak m\subseteq \frak q^{n+1}M:\frak q=\frak q^{n}M$. It follows that
$$
\begin{aligned}
\mathrm{ir}_M({\frak q}^{n+1}M)&=\lambda_R(\frak q^{n+1}M:\frak m/\frak q^{n+1}M)=\lambda_R(0:_{\frak q^{n}M/\frak q^{n+1}M}\frak m)\\
&=\lambda_R(0:_{M/\frak q M}\frak m) \binom{n+d-1}{d-1}=\dim_{\frak k}(\mathrm{Soc}(M/\frak q M))\binom{n+d-1}{d-1}.
\end{aligned}
$$
So the conclusion is proved, if we show that $\dim_{\frak k} \mathrm{Soc}(M/\frak q M)=\dim_{\frak k} \mathrm{Soc}(H^d_{\frak m}(M))$. Indeed, let $\frak q=(x_1,\ldots, x_d)$ and $\overline{M}= M/x_1M$. Then, it is easy to show by induction on $d$ that
$$
\begin{aligned}
\dim_{\frak k} \mathrm{Soc}(M/\frak q M)&=\dim_{\frak k} \mathrm{Soc}(\overline{M}/\frak q \overline{M})\\
&=\dim_{\frak k} \mathrm{Soc}(H^{d-1}_{\frak m}(\overline{M})) = \dim_{\frak k} \mathrm{Soc}(H^d_{\frak m}(M)).
\end{aligned}
$$

(ii) $\Rightarrow$ (iii) and (iii) $\Rightarrow$ (iv) are trivial.

(iv) $\Rightarrow$ (i) Let $\frak q =(x_1, \ldots , x_d)$ be a parameter ideal of $M$ such that $\frak q\subseteq \frak m^l$, where $l$ is a positive integer as in Lemma 5.1 such that the canonical homomorphism on socles
$$\mathrm{Soc}(M/\frak q M)=\mathrm{Soc}(H^d({\frak q}, M))\to \mathrm{Soc}(H^d_{\frak m}(M))$$
is surjective. Consider the submodule  $(\underline{x})_M^{\mathrm{lim}}=\bigcup\limits_{t\ge 0}(x_1^{t+1},\ldots,x_d^{t+1}):(x_1\ldots x_d)^{t}$ of $M$. This submodule is called the limit closure of the sequence $x_1,\ldots,x_d$. Then 
$(\underline{x})_M^{\mathrm{lim}}/\frak q M$ is just the kernel of the canonical homomorphism $M/\frak q M\to H^d_{\frak m}(M)$ (see \cite{CHL99}, \cite{CQ10}). Moreover, it was proved in \cite[Corollary 2.4]{CHL99} that the module $M$ is Cohen-Macaulay if and only if $(\underline{x})_M^{\mathrm{lim}}=\frak q M$. Now we assume that $\mathrm{ir}_M({\frak q}M)=\dim_{\frak k} \mathrm{Soc}(H^d_{\frak m}(M))$, therefore $\dim_{\frak k} \mathrm{Soc}(H^d_{\frak m}(M)) =\dim_{\frak k} \mathrm{Soc}(M/\frak q M)$. Then it follows from the exact sequence
$$0\to (\underline{x})_M^{\mathrm{lim}}/\frak q M \to M/\frak q M\to H^d_{\frak m}(M) $$
 and the choice of $l$ that the sequence
$$0\to \mathrm{Soc}((\underline{x})_M^{\mathrm{lim}}/\frak q M) \to \mathrm{Soc}( M/\frak q M)\to\mathrm{Soc}( H^d_{\frak m}(M))\to 0 $$
 is a short exact sequence. Hence $\dim_{\frak k} \mathrm{Soc}((\underline{x})_M^{\mathrm{lim}}/\frak q M)=0 $ by the hypothesis. So $(\underline{x})_M^{\mathrm{lim}}=\frak q M$, and therefore $M$ is a Cohen-Macaulay module.
  \end{proof}
  \rm It should be mentioned here that the proof of implication (iv) $\Rightarrow$ (i) of Theorem \ref{T4.7} is essentially following the proof  of \cite [Theorem 2.7] {MRS08}.  It is well-known that a Noetherian local ring $R$ with $\dim R =d$ is Gorenstein if and only if $R$ is Cohen-Macaulay with the Cohen-Macaulay 
type r$(R)=\dim_{\frak k}\mathrm{Ext}^d(\frak k,M))= 1$. Therefore the following result, which is the main result of \cite[Theorem]{MRS08}, is an immediate consequence of Theorem \ref{T4.7}.
 
 \begin{corollary}\label{5.3}
 Let $(R, \frak m)$ be a Noetherian local ring of dimension $d$. Then $R$ is Gorenstein if and only if there exists an irreducible parameter ideal $\frak q$ contained in $\frak m^l$, where $l$ is a positive integer as in Lemma \ref{L4.6}. Moreover, if $R$ is Gorenstein, then for any parameter ideal $\frak q$ it holds ${\rm ir}_R(\frak q^{n+1}) = \binom{n+d-1}{d-1}$ for all $n\geq 0$. 
 \end{corollary}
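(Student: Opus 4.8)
The plan is to deduce the corollary from Theorem~\ref{T4.7} applied to the module $M=R$, together with the classical fact that among Cohen--Macaulay local rings the Gorenstein ones are exactly those of type $1$. The preliminary observation I would record is that, by the socle description of the index of reducibility in the $\frak m$-primary case noted in Section~2, a parameter ideal $\frak q$ of $R$ is irreducible if and only if
$$\mathrm{ir}_R(\frak q)=\dim_{\frak k}\mathrm{Soc}(R/\frak q R)=1.$$
I would also use that $\dim_{\frak k}\mathrm{Soc}(H^d_{\frak m}(R))\ge 1$ always, since $H^d_{\frak m}(R)$ is a nonzero Artinian $R$-module, and that a system of parameters contained in the $\frak m$-primary ideal $\frak m^l$ exists by prime avoidance.

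For the ``only if'' direction and the final formula, I would assume $R$ Gorenstein, hence Cohen--Macaulay with $\dim_{\frak k}\mathrm{Soc}(H^d_{\frak m}(R))=1$. The implication (i)$\Rightarrow$(ii) of Theorem~\ref{T4.7} then gives, for every parameter ideal $\frak q$ and every $n\ge 0$,
$$\mathrm{ir}_R(\frak q^{n+1})=\dim_{\frak k}\mathrm{Soc}(H^d_{\frak m}(R))\binom{n+d-1}{d-1}=\binom{n+d-1}{d-1},$$
which is the ``moreover'' assertion; the case $n=0$ shows every parameter ideal of $R$ is irreducible, so in particular there exists an irreducible parameter ideal contained in $\frak m^l$.

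For the ``if'' direction, I would start from an irreducible parameter ideal $\frak q\subseteq\frak m^l$, with $l$ as in Lemma~\ref{L4.6}. Since $H^d(\frak q,R)=R/\frak q$, Lemma~\ref{L4.6} supplies a surjection $\mathrm{Soc}(R/\frak q)\twoheadrightarrow\mathrm{Soc}(H^d_{\frak m}(R))$, and combining this with the preliminary observation forces
$$1=\mathrm{ir}_R(\frak q)=\dim_{\frak k}\mathrm{Soc}(R/\frak q)\ \ge\ \dim_{\frak k}\mathrm{Soc}(H^d_{\frak m}(R))\ \ge\ 1.$$
Thus $\mathrm{ir}_R(\frak q)=\dim_{\frak k}\mathrm{Soc}(H^d_{\frak m}(R))$, so condition (iv) of Theorem~\ref{T4.7} holds and $R$ is Cohen--Macaulay; being Cohen--Macaulay of type $\dim_{\frak k}\mathrm{Soc}(H^d_{\frak m}(R))=1$, it is Gorenstein.

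The whole argument is bookkeeping on top of Theorem~\ref{T4.7} and Lemma~\ref{L4.6}. The two points I expect to need a word of justification --- and which are the closest thing to an obstacle --- are the standard identification of the Cohen--Macaulay type $r(R)$ with $\dim_{\frak k}\mathrm{Soc}(H^d_{\frak m}(R))$ (so that ``$r(R)=1$'' and ``$\dim_{\frak k}\mathrm{Soc}(H^d_{\frak m}(R))=1$'' may be used interchangeably, a fact essentially already contained in the proof of Theorem~\ref{T4.7}), and the socle squeeze showing that a single irreducible parameter ideal inside $\frak m^l$ already pins $\dim_{\frak k}\mathrm{Soc}(H^d_{\frak m}(R))$ down to $1$.
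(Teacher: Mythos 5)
Your proposal is correct and follows essentially the same route as the paper: both directions rest on Theorem~\ref{T4.7} together with the socle surjection of Lemma~\ref{L4.6}, the squeeze $1=\dim_{\frak k}\mathrm{Soc}(R/\frak q)\ge\dim_{\frak k}\mathrm{Soc}(H^d_{\frak m}(R))\ge 1$ forcing condition (iv), and the identification of the type with the socle dimension to conclude Gorensteinness. Your write-up merely spells out a few points the paper leaves implicit (the ``only if'' direction via $n=0$ in (ii), and the explicit inequality from the surjection), so no substantive difference.
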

 \begin{proof}
Let $\frak q=(x_1,\ldots , x_d)$ be an irreducible parameter ideal contained in $\frak m^l$ such that the map
 $$\mathrm{Soc}( M/\frak q M)\to\mathrm{Soc}( H^d_{\frak m}(M))$$ is surjective.
 Since $\dim_{\frak k}\mathrm{Soc}( H^d_{\frak m}(M))\not= 0$ and $\dim_{\frak k} \mathrm{Soc}(M/\frak q M)=1$ by the hypothesis, 
 $\dim_{\frak k} \mathrm{Soc}( H^d_{\frak m}(M))=1. $ This imples by Theorem \ref{T4.7} that  $M$ is a Cohen-Macaulay module with 
 $${\rm r}(R)=\dim_{\frak k}\mathrm{Ext}^d(\frak k,M)=\dim_{\frak k} \mathrm{Soc}(M/\frak q M)=  1,$$ and so $R$ is Gorenstein. The last conclusion follows from Theorem \ref{T4.7}.
 \end{proof}
 \begin{remark} \rm  Recently, it was shown by many works that the index of reducibility of parameter ideals can be used to deduce a lot of information on the structure of some classes of modules such as Buchsbaum modules \cite{GS03}, generalized Cohen-Macaulay modules \cite{CT08}, 
\cite{Q12} and sequentially Cohen-Macaulay modules \cite{T13}.
 It follows from Theorem \ref{T4.7} that $M$ is a Cohen-Macaulay module if and only if there exists a positive integer $l$ such that
    $\mathrm{ir}_M({\frak q}M)=\dim_{\frak k}\mathrm{Soc}(H^d_{\frak m}(M))$ for all parameter ideals $\frak q$ of $M$ contained in $\frak m^l$. The necessary condition of this result can be extended for a large class of modules called generalized Cohen-Macaulay modules. An $R$-module $M$ of dimension $d$ is said to be a generalized Cohen-Macaulay module (see \cite{CST78}) if $H^i_{\frak m}(M)$ is of finite length for all $i=0,\ldots , d-1$. We proved in \cite [Theorem 1.1]{CT08} (see also \cite [Corollary 4.4]{CQ11}) that if $M$ is a generalized Cohen-Macaulay module, then there exists an integer $l$ such that 
  $${\rm ir}_M(\frak qM) = \sum_{i=0}^d \binom{d}{i}\dim_{\frak k}\mathrm{Soc}(H^i_{\frak m}(M)).$$
  for all parameter ideals $\frak q \subseteq \frak m^l$. Therefore, we close this paper with the following two open questions, which are suggested during the work in this paper, on the characterization of the Cohen-Macaulayness and of the generalized Cohen-Macaulayness in terms of the index of reducibility of parameter ideals as follows.
  \vskip0.1cm
  \noindent
  {\bf Open questions 5.5.} Let $M$ be a finitely generated module of dimension $d$ over a local ring $R$. Then our questions are \\
  1. Is $M$  a Cohen-Macaulay module if and only if there exists a parameter ideal $\frak q$ of $M$ such that $$\mathrm{ir}_M({\frak qM}^{n+1}M)=\dim_{\frak k}\mathrm{Soc}(H^d_{\frak m}(M)) \binom{n+d-1}{d-1}$$ for all $n\geq 0$?
 
  \noindent
  2. Is $M$ a generalized Cohen-Macaulay module if and only if  there exists  a positive integer $l$ such that  $${\rm ir}_M(\frak qM)= \sum_{i=0}^d \binom{d}{i}\dim_{\frak k}\mathrm{Soc}(H^i_{\frak m}(M))$$ for all parameter ideals $\frak q \subseteq \frak m^l$?
 \end{remark}
 
\bigskip
\noindent{\bf Acknowledgments.} The authors would like to thank the anonymous referee for helpful comments on the earlier version. This paper was finished  during the  authors' visit at the Vietnam
Institute for Advanced Study in Mathematics (VIASM).
They would like to thank VIASM for their support and hospitality.

Institute of Mathematics, Vietnam Academy of Science and Technology, 18 Hoang Quoc Viet Road, 10307
Hanoi, Vietnam\\
{\it E-mail address}: ntcuong@math.ac.vn
\vskip0.1cm
\noindent
Department of Mathematics, FPT University, 8 Ton That Thuyet Road, Ha Noi, Vietnam\\
 {\it E-mail address}: quyph@fpt.edu.vn
\vskip0.1cm
\noindent
Institute of Mathematics, Vietnam Academy of Science and Technology, 18 Hoang Quoc Viet Road, 10307
Hanoi, Vietnam\\
{\it E-mail address}: hltruong@math.ac.vn
\end{document}